\documentclass[12pt]{article}

\usepackage[english]{babel}
\usepackage[cp1250]{inputenc}
\usepackage{graphicx}
\usepackage{url}
\usepackage[all]{xy}
\usepackage{multicol}
\usepackage{amsthm}
\usepackage{amsmath}
\usepackage{amssymb}
\usepackage{geometry}
\usepackage{pinlabel}
\usepackage{hyperref}

\setlength{\textwidth}{17cm}
\setlength{\textheight}{224mm}
\setlength{\topmargin}{0cm}
\setlength{\evensidemargin}{0cm}
\setlength{\oddsidemargin}{\paperwidth}
\setlength{\footskip}{30pt}
\addtolength{\oddsidemargin}{-\textwidth}
\addtolength{\oddsidemargin}{-2in}
\pagestyle{plain}
\linespread{1.1}

\newtheorem{Theorem}{Theorem}[section]
\newtheorem{lemma}[Theorem]{Lemma}
\newtheorem{proposition}[Theorem]{Proposition}
\newtheorem{corollary}[Theorem]{Corollary}
\newtheorem{definition}[Theorem]{Definition}
\newtheorem{remark}[Theorem]{Remark}
\newtheorem{example}[Theorem]{Example}

\newcommand{\ZZ}{\mathbb {Z}}

\newcommand{\tr}{\triangleright}
\newcommand{\zn}{Z_{n}}

\newcommand{\mo}{\rm{ mod}}

\begin{document}
\title{Rack invariants of links in $L(p,1)$}

\author{Eva Horvat \\ email \href{mailto:eva.horvat@pef.uni-lj.si}{eva.horvat@pef.uni-lj.si}}

\date{\today}
\maketitle

\begin{abstract}
We describe a presentation for the augmented fundamental rack of a link in the lens space $L(p,1)$. Using this presentation, the (enhanced) counting rack invariants that have been defined for the classical links are applied to the links in $L(p,1)$. In this case, the counting rack invariants also include the information about the action of $\pi _{1}(L(p,1))$ on the augmented fundamental rack of a link. 
\end{abstract}
{\bf AMS Classification:} 57M27, 57M05 (primary), 57M25 (secondary).\\
{\bf Keywords:} racks, counting invariants, links in lens spaces.

\begin{section}{Introduction \& Background}\label{sec1}
While being an interesting algebraic object by themselves, racks and quandles have been extensively used in the study of classical knots and links \cite{MS}. Any link in a 3-manifold has an (augmented) fundamental rack, whose presentation may be obtained from a link diagram $D$, and in \cite{FR} it has been shown that the fundamental rack is a complete invariant for irreducible framed links in a 3-manifold. In \cite{SN1}, Sam Nelson defined computable rack invariants of classical links, which are based on counting the rack homomorphisms from the fundamental rack of a link to a fixed finite rack. 

In this paper we describe the augmented fundamental rack $R(D)$ of a framed link in the lens space $L(p,1)$. We study the set of rack homomorphisms from $R(D)$ to a finite rack $X$, based on which we define the counting rack invariants of links in $L(p,1)$. Using a \verb|Phyton| code, we are able to compute the rack counting invariants of knots and links in $L(p,1)$. The set of rack homomorphisms $Hom(R(D),X)$ also displays the action of $\pi _{1}(L(p,1))$ on the fundamental rack $R(D)$. The information about this action may be used as an additional tool to distinguish links in $L(p,1)$. 

The paper is organized as follows. In subsection \ref{subs11} we review the basic rack terms which will be needed in the rest of the paper. In subsection \ref{subs12} we define the augmented fundamental rack of a link in a 3-manifold, and recall the presentation for the fundamental rack of a classical link. In section \ref{sec2} we describe a general presentation of the augmented fundamental rack $R(D)$ of a link in $L(p,1)$, based on a link diagram $D$. We simplify this presentation and transform it to a finite primary rack presentation. In subsection \ref{subs21}, we study the set of rack homomorphisms from $R(D)$ to a finite rack $X$. By classifying those homomorphisms, we are able to compute the homomorphism set $Hom(R(D),X)$ for a given finite rack $X$. In section \ref{sec3}, we define the counting rack invariants of links in $L(p,1)$. We demonstrate the calculation of these invariants in some examples. In the final section \ref{sec4}, we use the calculation of $Hom(R(D),X)$ to study the action of $\pi _{1}(L(p,1))$ on the augmented fundamental rack $R(D)$. 

\begin{subsection}{The definition of a rack}\label{subs11}
\begin{definition}\label{def1}
A \textbf{rack} is an algebraic structure comprising a nonempty set $R$ with a binary operation $\tr \colon R\times R\to R$ satisfying two axioms:
\begin{enumerate}
\item For any $x,y\in R$ there is a unique $z\in R$ such that $x=z\tr y$.
\item For any $x,y,z\in R$ the formula $(x\tr y)\tr z=(x\tr z)\tr (y\tr z)$ holds.  
\end{enumerate} 
\end{definition}
It follows from the first rack axiom that the map $f_{y}\colon R\to R$ defined by $f_{y}(t)=t\tr y$ is a bijection. We denote its inverse by $f_{y}^{-1}(t)=t\tr \overline{y}$. 

We say that two elements $x,y\in R$ are \textbf{operator equivalent} (which we denote by $x\equiv y$) if the equality $z\tr x=z\tr y$ holds for every $z\in R$. 

\begin{lemma} \label{lemma1} If $y$ is an element of a rack $R$, then $y\tr y\equiv y$. 
\end{lemma}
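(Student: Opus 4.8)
The plan is to show directly that $z \tr (y \tr y) = z \tr y$ holds for every $z \in R$, which is precisely the assertion $y \tr y \equiv y$. The only axiom that relates nested applications of $\tr$ is the second rack axiom, so it should be the engine of the proof; the first axiom will enter only through the bijectivity of the maps $f_{y}$.

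First I would specialize the second rack axiom $(x \tr y)\tr z = (x\tr z)\tr (y\tr z)$ by substituting $z = y$, which yields, for every $x \in R$,
\[
(x \tr y) \tr y = (x \tr y) \tr (y \tr y).
\]
This is the key identity: it produces the term $y \tr y$ as a right operand while keeping $y$ in the analogous slot on the left, over a common ``base'' $x \tr y$.

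The remaining step is to upgrade the quantifier ``for every $x$'' to ``for every $z$''. Since $f_{y}(t) = t \tr y$ is a bijection of $R$ (a consequence of the first axiom), the element $x \tr y$ ranges over all of $R$ as $x$ does. Writing $z = x \tr y$, the displayed identity becomes $z \tr y = z \tr (y \tr y)$ for every $z \in R$, which is exactly the definition of $y \tr y \equiv y$.

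I expect no serious obstacle here: the whole argument hinges on choosing the substitution $z = y$ in the second axiom and then invoking surjectivity of $f_{y}$. The only point requiring a moment's care is reading the equivalence in the correct direction — operator equivalence compares two elements precisely as right operands, and the identity above does compare $y$ and $y \tr y$ in exactly that slot.
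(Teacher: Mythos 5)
Your proof is correct and is essentially the same argument as the paper's: both specialize the second rack axiom at $z=y$ to get $(x\tr y)\tr(y\tr y)=(x\tr y)\tr y$ and then use the first axiom (surjectivity of $f_{y}$) to let $x\tr y$ range over all of $R$. The paper merely phrases it in the other order, starting from an arbitrary element and writing it as $z\tr y$ before applying the axiom.
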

\begin{proof} Let $x\in R$ be any element of the rack. By the first rack axiom, there is a unique $z\in R$ such that $x=z\tr y$. Applying the second rack axiom we compute $x\tr (y\tr y)=(z\tr y)\tr (y\tr y)=(z\tr y)\tr y=x\tr y$. 
\end{proof}

If the underlying set of a rack is finite, then the rack operation may be encoded in a square matrix with integer entries as follows \cite{SN1}. Let $R$ be a rack with $n$ elements $y_{1},\ldots ,y_{n}$. The \textbf{rack matrix} of $R$ is the matrix $M_{R}\in \ZZ ^{n\times n}$ defined by $M_{R}^{(i,j)}=k$ where $y_{i}\tr y_{j}=y_{k}$ for $i,j=1,2,\ldots n$. Observe that reordering the elements of $R$ changes the rack matrix, so $M_{R}$ is only defined up to a simultaneous permutation of its rows and columns.   Denote by $\zn $ the set $\{k\in \mathbb{Z}|\, 1\leq k\leq n\}$. The following lemma may be compared to \cite[Lemma 1]{SN2}. 

\begin{lemma}An $(n\times n)$ integer matrix $M$ with entries from the set $\zn $ is a rack matrix if and only if the following conditions are satisfied:
\begin{itemize}
\item[a) ]Each column of $M$ is a permutation of the set $\zn $. 
\item[b) ]For any indices $i,j,k\in \zn $ we have $M^{(M^{(i,j)},k)}=M^{(M^{(i,k)},M^{(j,k)})}$. 
\end{itemize}
\end{lemma}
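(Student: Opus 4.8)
The plan is to prove the "if and only if" by showing that the two stated matrix conditions are precisely the matrix translations of the two rack axioms from Definition~\ref{def1}. Throughout I identify a finite set $R=\{y_1,\dots,y_n\}$ with its index set $\zn$, so that an $(n\times n)$ matrix $M$ with entries in $\zn$ is exactly the data of a binary operation $\tr$ on $R$ via $y_i\tr y_j=y_{M^{(i,j)}}$. Under this dictionary, proving the lemma reduces to checking that condition (a) is equivalent to the first rack axiom and condition (b) is equivalent to the second rack axiom.

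For the first axiom, I would argue as follows. The first axiom says that for each fixed $y$, the map $f_y\colon t\mapsto t\tr y$ is a bijection of $R$; as noted after Definition~\ref{def1}, uniqueness of the solution $z$ to $x=z\tr y$ is exactly the statement that $f_y$ is a bijection. Fixing the column index $j$ (i.e.\ fixing $y=y_j$), the map $f_{y_j}$ sends $y_i$ to $y_{M^{(i,j)}}$, so $f_{y_j}$ is a bijection precisely when the entries $M^{(1,j)},\dots,M^{(n,j)}$ running down the $j$-th column form a permutation of $\zn$. Ranging over all $j$ gives the equivalence of (a) with the first axiom.

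For the second axiom, I would translate $(x\tr y)\tr z=(x\tr z)\tr(y\tr z)$ directly into indices. Writing $x=y_i$, $y=y_j$, $z=y_k$, the left-hand side is $y_i\tr y_j=y_{M^{(i,j)}}$ followed by $\tr y_k$, which gives $y_{M^{(M^{(i,j)},k)}}$. For the right-hand side, $x\tr z=y_{M^{(i,k)}}$ and $y\tr z=y_{M^{(j,k)}}$, and applying $\tr$ yields $y_{M^{(M^{(i,k)},M^{(j,k)})}}$. Equating the two subscripts for all $i,j,k\in\zn$ is exactly condition (b). Since this is a finite set of equalities indexed by all triples, the axiom holds on all of $R$ if and only if (b) holds.

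I do not expect a genuine obstacle here: the statement is essentially a change of notation, and the main content is making the index bookkeeping in the second axiom precise and verifying that the quantifiers match (all $i,j,k\in\zn$ on the matrix side versus all $x,y,z\in R$ on the axiom side). The one point worth stating carefully is that condition (a) being about \emph{columns} rather than rows corresponds to the operation $t\mapsto t\tr y$ fixing the second argument $y$; I would flag this to avoid the easy error of confusing rows and columns. Assembling the two equivalences then proves the lemma.
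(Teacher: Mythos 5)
Your proof is correct and follows the same route as the paper's: both arguments amount to observing that, under the dictionary $y_i\tr y_j=y_{M^{(i,j)}}$, condition a) is the matrix form of the first rack axiom (bijectivity of $t\mapsto t\tr y_j$, i.e.\ each column is a permutation) and condition b) is the matrix form of the second axiom. Your write-up is in fact more explicit than the paper's, which states the two equivalences without spelling out the index bookkeeping; your remark about columns versus rows correctly matches the convention in the paper's definition of $M_R$.
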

\begin{proof} Let $M$ be an $(n\times n)$ rack matrix with the underlying set $\{y_{1},\ldots ,y_{n}\}$. Then the first rack axiom implies the condition a), and the second rack axiom implies the condition b). 

Now suppose we are given the matrix $M$ with entries from $\zn $, satisfying the conditions a) and b). Define a binary operation $\tr $ on the set $\zn $ by $i\tr j=M^{(i,j)}$. If the conditions a) and b) are satisfied, it follows that $(\zn ,\tr )$ is a rack with the rack matrix $M$. 
\end{proof}
 
We recall the following definition from \cite{SN1}. 
\begin{definition} \label{def2}Let $R$ be a rack. For any $x\in R$ and $n\in \mathbb{N}$ define $x^{\tr n}$ recursively by $x^{\tr 1}=x\tr x$ and $x^{\tr (k+1)}=x^{\tr k}\tr x^{\tr k}$. The \textbf{rack rank} of $x$ is the minimal number $N(x)\in \mathbb{N}$ such that $x^{\tr N(x)}=x$, or $N(x)=\infty $ if $x^{\tr N}\neq x$ for all $N\in \mathbb{N}$. The \textbf{rack rank} of $R$ is then $$N(R)=\textrm{lcm}\{N(x)|\, x\in R\}\;.$$ A rack whose rack rank equals 1 is called a \textbf{quandle}. 
\end{definition} If $R$ is a finite rack, then the rack rank $N(R)$ may be obtained from the rack matrix of $R$, as remarked in \cite{SN1}: the diagonal of the rack matrix is a permutation $\pi \colon R\to R$, given by $\pi (x)=x\tr x$, and the rack rank of $R$ equals the order of $\pi \in S_{|R|}$. 
\end{subsection}

\begin{example} The matrix $\begin{bmatrix}
1 & 1 & 1\\
2 & 3 & 3\\
3 & 2 & 2
\end{bmatrix}$ is a rack matrix of a rack $R$ with $N(R)=2$. 
\end{example}

The concept of rack may be generalized by making the operator group explicit. We recall the following definition from \cite[page 355]{FR}. 
\begin{definition} \label{def3} Let $G$ be a group, acting on itself by conjugation as $g^{h}:=h^{-1}gh$ for any $g,h\in G$. An \textbf{augmented rack} $(X,G)$ is a set $X$ with an action by the group $G$, written as $(x,g)\mapsto x^{g}$ and a function $\partial \colon X\to G$, satisfying the augmentation identity: $$\partial (x^{g})=g^{-1}(\partial x)g\textrm{  for all $x\in X, g\in G$}\;.$$ The rack operation of $X$ on itself is defined by $x\tr y:=x^{\partial (y)}$.
\end{definition}

\begin{definition} \label{def4} Let $(X,G)$ be an augmented rack and let $Y$ be a rack. A function $f\colon X\to Y$ is a \textbf{rack homomorphism} if the following two conditions are satisfied for any $x_{1},x_{2}\in X$:
\begin{enumerate}
\item $f(x_{1}\tr x_{2})=f(x_{1})\tr f(x_{2})$ 
\item $f(x_{1})=f(x_{2})\Leftrightarrow f(x_{1}^{g})=f(x_{2}^{g})$ for any $g\in G$.
\end{enumerate}
\end{definition}
Thus, a rack homomorphism $f$ of an augmented rack $(X,G)$ induces an action on the image $f(X)$ by $(f(x),g)\mapsto f(x^{g})$, which is well defined by the second condition above.  
 
\begin{definition} \label{def5}Let $R$ be a rack. An equivalence relation $\sim $ on $R$ is called a \textbf{congruence} if the condition $(x\sim y,\, z\sim w\, \Rightarrow x\tr z\sim y\tr w)$ is valid for all $x,y,z,w\in R$. 
\end{definition} 
We introduce the general rack presentations, following the definition in \cite[page 376]{FR}. 
\begin{definition} \label{def6} For two sets $A$ and $B$, denote by $F(A\cup B)$ the free group on $A\cup B$. The \textbf{extended free rack} $FR(A,B)$ is the augmented rack $(A\times F(A\cup B), F(A\cup B))$, where the map $\partial \colon A\times F(A\cup B)\to F(A\cup B)$ is given by $\partial (a^{w}):=w^{-1}aw$. The augmentation identity implies that the rack operation is given by $$(a^{w})\tr (c^{z})=(a^{w})^{\partial (c^{z})}=a^{wz^{-1}cz}$$ for any $a,c\in A$ and $w,z\in F(A\cup B)$.

The \textbf{free rack} on the set $A$ is then defined as $FR(A):=FR(A,\{\})$. 
\end{definition} 
A general rack presentation is given by the following sets:
\begin{itemize}
\item the set of the primary generators $S_{P}$,
\item the set of the operator generators $S_{O}$, 
\item the set of the primary relations $R_{P}$ and 
\item the set of the operator relations $R_{O}$.
\end{itemize} 
\begin{definition}\label{def7} The rack generated by the presentation $[S_{P},S_{O}:R_{P},R_{O}]$ is the quotient $\frac{FR(S_{P},S_{O})}{\sim }$, where $\sim $ is the smallest congruence on $FR(S_{P},S_{O})$ containing 
\begin{itemize}
\item $x\sim y$ if $(x=y)\in R_{P}$
\item $z\tr x\sim z\tr y$ for each $z\in FR(S_{P},S_{O})$ if $(x=y)\in R_{P}$
\item $z^{u}\sim z^{v}$ for each $z\in FR(S_{P},S_{O})$ if $u\equiv v\in R_{O}$.
\end{itemize}
\end{definition}
A general rack presentation without operator generators ($S_{O}=\emptyset $)  is called a primary rack presentation. 

\begin{subsection}{The fundamental rack of a link in a 3-manifold}\label{subs12}
We briefly summarize the following from \cite[page 358]{FR} for the reader's convenience. Let $Q$ be a closed connected orientable 3-manifold and let $L$ be a link in $Q$. The link is \textbf{framed} if a cross section $\lambda \colon L\to \partial N(L)$ of the normal disk bundle is chosen. We call this cross section a \textbf{framing} and denote its image by $L^{+}=\lambda (L)$.  

Let $Q_{0}=\textrm{closure}(Q-N(L))$ and choose a basepoint $x_{0}\in Q_{0}$. Denote $$\Gamma (L)=\{\textrm{homotopy classes of paths in $Q_{0}$ from a point in $L^{+}$ to $x_{0}$}\}\;.$$ During the homotopy the initial point may move around on $L^{+}$, while the final point is kept fixed. 

The fundamental group $\pi _{1}(Q_{0},x_{0})$ acts on the set $\Gamma (L)$: let $a\in \Gamma (L)$ be represented by a path $\alpha $ and $g\in \pi _{1}(Q_{0},x_{0})$ be represented by a loop $\gamma $, then $a^{g}:=[\alpha \cdot \gamma ]$, where $\cdot $ denotes concatenation of paths. 

Using this action, the set $\Gamma (L)$ may be equipped with a structure of an augmented rack. Any $p\in L^{+}$ lies on a unique meridian circle of the normal circle bundle and we denote by $m_{p}$ the loop based at $p$ which follows around the meridian in the positive direction. 

\begin{definition}\label{def8}The \textbf{augmented fundamental rack} of the framed link $L$ is the augmented rack $(\Gamma (L),\pi _{1}(Q_{0},x_{0}))$ as above with the function $\partial \colon \Gamma (L)\to \pi _{1}(Q_{0},x_{0})$ defined as follows. Given two classes $a,b\in \Gamma $, which are represented by the paths $\alpha $ and $\beta $ respectively, define $\partial (b)=\overline{\beta }\cdot m_{\beta (0)}\cdot \beta$, where $\cdot $ denotes the concatenation of paths. This produces the rack operation $$a\tr b=a ^{\partial (b)}=[\alpha \cdot \overline{\beta }\cdot m_{\beta (0)}\cdot \beta ]\;.$$  
\end{definition}

\begin{example}\label{ex1}\textbf{The fundamental rack of a classical link.} 
Let $L$ be a framed link in $S^{3}$, given by a blackboard-framed link diagram $D$. Label the arcs of $D$ by $y_{1},\ldots ,y_{n}$. For each arc $y_{i}$ of the diagram $D$, choose a path $x_{i}$ from a point on $\lambda (y_{i})$ to the basepoint, where $\lambda\colon L\to \partial N(L)$ is the chosen framing. Then the homotopy classes of the paths $x_{1},\ldots ,x_{n}$ are the generators of the fundamental rack of $L$. 
\begin{figure}[h]
\labellist
\normalsize \hair 2pt
\pinlabel $x_{k}$ at 260 0
\pinlabel $\partial (x_{j})$ at 410 100
\pinlabel $x_{i}$ at 450 220
\pinlabel $y_{k}$ at 80 120
\pinlabel $y_{j}$ at 170 320
\pinlabel $y_{i}$ at 350 380
\pinlabel $*$ at 530 10
\endlabellist
\begin{center}
\includegraphics[scale=0.3]{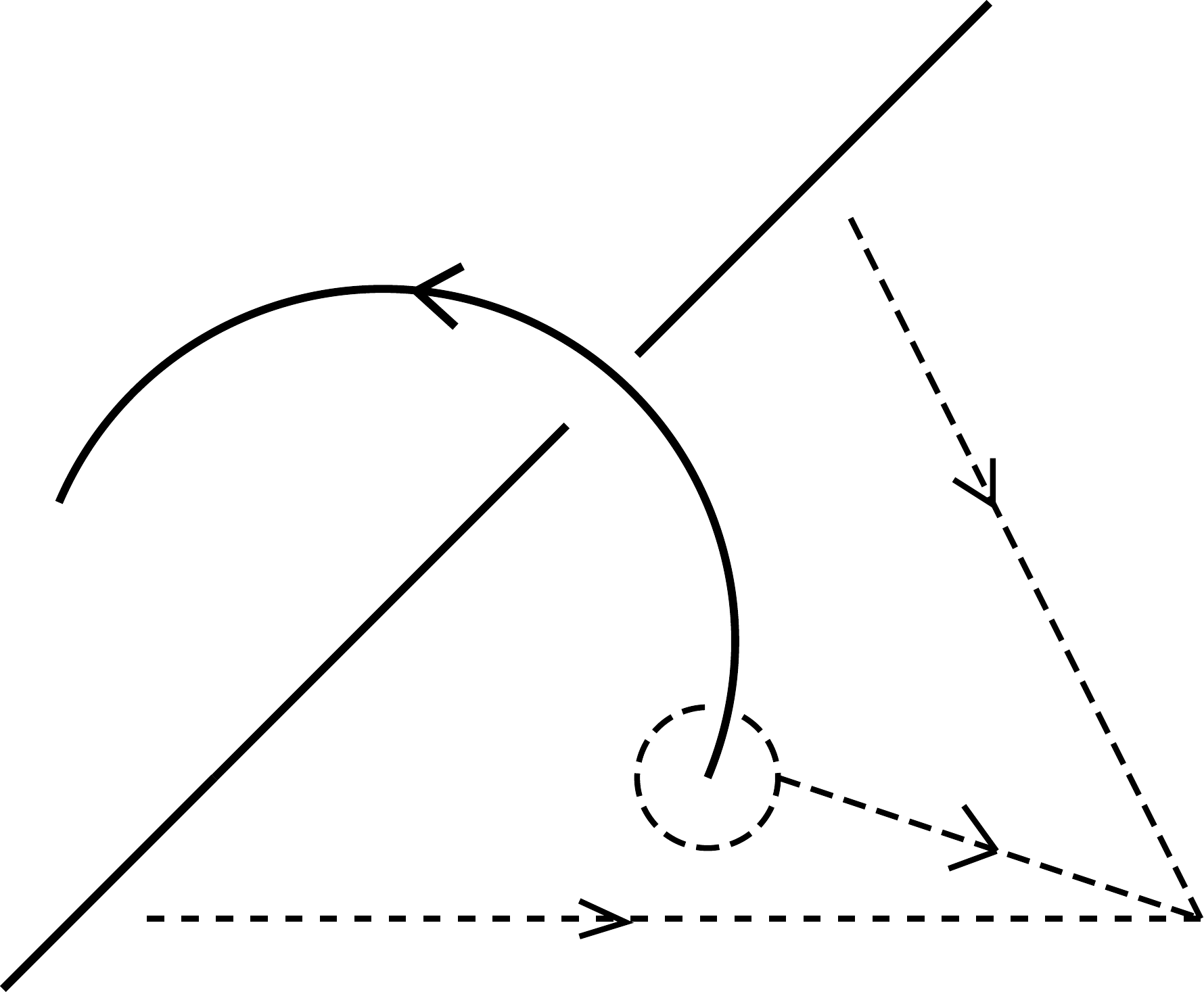}
\caption{The illustration of the crossing relation}
\label{fig:relation}
\end{center}
\end{figure}
The augmentation map $\partial \colon \Gamma (L)\to \pi _{1}(S^{3}\backslash N(L),*)$ is given by $\partial (x_{i})=\overline{x}_{i}\cdot m_{x_{i}(0)}\cdot x_{i}$ and the fundamental group $\pi _{1}(S^{3}\backslash N(L),*)$ is actually generated by the images $\partial (x_{1}),\ldots ,\partial (x_{n})$. 
Each crossing of the diagram $D$ consists of an overcrossing arc $y_{j}$ and two arcs $y_{i}$ and $y_{k}$ of the undercrossing strand. If we see the arc $y_{i}$ on the right (and $y_{k}$ on the left) of $y_{j}$ when passing along $y_{j}$, then we may see the homotopy between $x_{i}^{\partial (x_{j})}$ and $x_{k}$ in the Figure \ref{fig:relation}. This homotopy implies the crossing relation $x_{i}\tr x_{j}=x_{k}$ in the fundamental rack of $L$. The fundamental rack of $L$ then has a primary presentation $$\left [x_{1},\ldots ,x_{n}:\textrm{crossing relations of $D$}\right ]\;.$$ 
\end{example}

\end{subsection}
\end{section}

\begin{section}{The fundamental augmented rack of a link in $L(p,1)$}\label{sec2}

Let $L$ be a framed link in the lens space $L(p,1)$. The lens space is obtained by integral surgery on an unknot $U$ in $S^{3}$. We specify the framing coefficient $p$ of the surgery along $U$ by twisting the unknot $p$ times in the positive direction. Thus there are $p$ positive self-crossings of $U$ in the diagram and the framing is given as the blackboard framing. We draw the framed link $L$  inside the diagram containing the surgery curve $U\subset S^{3}$ and denote the resulting diagram by $D$. Let $m$ be the number of crossings where both the underarc and the overarc belong to the link $L$. Label the arcs of the link $L$ in the diagram by $x_{1},\ldots ,x_{m+d}$ and label the arcs of the surgery curve $U$ by $a,a_{1},\ldots ,a_{d-1}$ as shown in the Figure \ref{fig:diagram}.  
\begin{figure}[h]
\labellist
\normalsize \hair 2pt
\pinlabel $a^{a^{2}}$ at 840 230
\pinlabel $a^{a}$ at 920 220
\pinlabel $a^{a^{p}}$ at 630 200
\pinlabel $a$ at 780 470
\pinlabel $a_{1}$ [b] at 320 335
\pinlabel $a_{d-2}$ [b] at 440 335
\pinlabel $a_{d-1}$ [b] at 490 335
\pinlabel $x_{d}$ [l] at 340 540
\pinlabel $x_{1}$ [l] at 300 390
\pinlabel $x_{d-2}$ [l] at 340 475
\pinlabel $x_{d-1}$ [l] at 340 505
\pinlabel $x_{m+1}$ [b] at 325 140
\pinlabel $x_{m+d-2}$ [l] at 340 70
\pinlabel $x_{m+d-1}$ [l] at 340 40
\pinlabel $x_{m+d}$ [l] at 340 10
\pinlabel $L$ at 80 260
\pinlabel $U$ at 360 230
\endlabellist
\begin{center}
\includegraphics[scale=0.50]{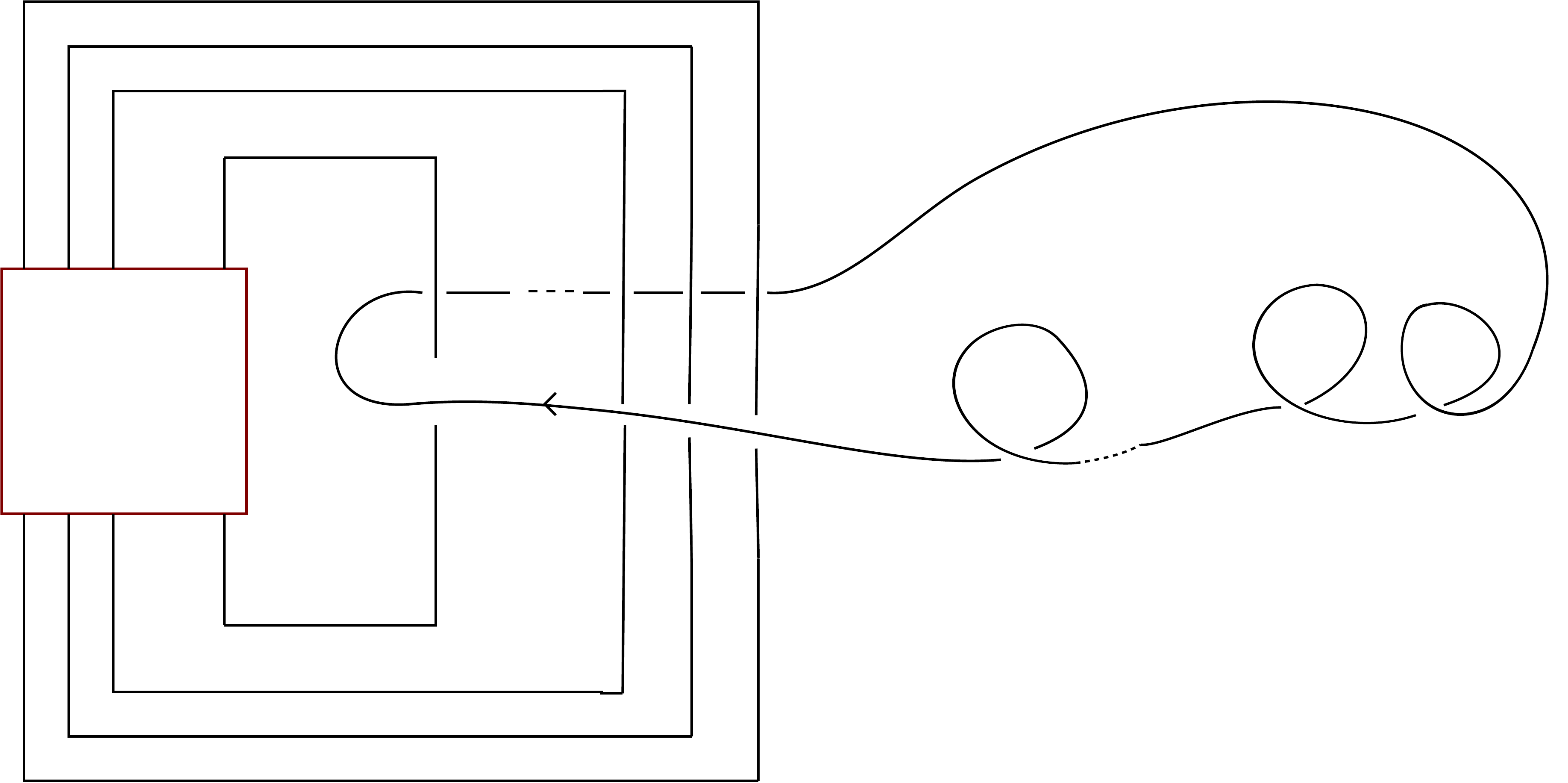}
\caption{The diagram $D$ of a framed link in $L(p,1)$}
\label{fig:diagram}
\end{center}
\end{figure}
The general presentation of the augmented fundamental rack of $L$ is obtained from the diagram $D$ as follows \cite[page 379]{FR}: \begin{itemize}
\item[(i)] The arcs of the link $L$ represent the primary generators $x_{1},\ldots ,x_{m+d}$
\item[(ii)] The arcs of the surgery curve $U$ represent the operator generators $a,a_{1},\ldots ,a_{d-1}$
\item[(iii)] At each crossing where the underarc belongs to the link $L$, the crossing relation gives a primary relator  
\item[(iv)] At each crossing where the underarc belongs to the surgery curve $U$, the crossing relation gives an operator relator. Finally we obtain another operator relator by reading around the surgery curve and noting the (signed) undercrossings. 
\end{itemize} We obtain the following general presentation: 
\begin{xalignat*}{1}
& [\{x_{1},\ldots ,x_{m+d}\},\{a,a_{1},\ldots ,a_{d-1}\}:\{\textrm{crossing relations of $L$},x_{i}^{a^{(a^{p})}}=x_{m+i}\textrm{ for $i=1,\ldots ,d$}\},\\
& \{\overline{x}_{1}^{\epsilon _{1}}a_{1}x_{1}^{\epsilon _{1}}\equiv a^{(a^{p})},\overline{x}_{2}^{\epsilon _{2}}a_{2}x_{2}^{\epsilon _{2}}\equiv a_{1},\ldots ,\overline{x}_{d}^{\epsilon _{d}}ax_{d}^{\epsilon _{d}}\equiv a_{d-1},\, aa^{a}\ldots a^{(a^{p-1})}\, \overline{x}_{1}^{\epsilon _{1}}\overline{x}_{2}^{\epsilon _{2}}\ldots \overline{x}_{d}^{\epsilon _{d}}\equiv 1\}]\;.
\end{xalignat*} 
where $\epsilon _{i}=1$ if the arc $x_{i}$ follows the arc $x_{m+i}$ in the chosen orientation of $L$, and $\epsilon _{i}=-1$ otherwise. 
Using the operator relations, the operator generators $a_{1},\ldots ,a_{d-1}$ are expressed in terms of the operator generator $a$ and thus may be omitted from the presentation. The remaining relation is calculated as $$a^{(a^{p})}\equiv \overline{x}_{1}^{\epsilon _{1}}a_{1}x_{1}^{\epsilon _{1}}\equiv \overline{x}_{1}^{\epsilon _{1}}\overline{x}_{2}^{\epsilon _{2}}a_{2}x_{2}^{\epsilon _{2}}x_{1}^{\epsilon _{1}}\equiv \ldots \equiv \overline{x}_{1}^{\epsilon _{1}}\overline{x}_{2}^{\epsilon _{2}}\ldots \overline{x}_{d}^{\epsilon _{d}}ax_{d}^{\epsilon _{d}}\ldots x_{2}^{\epsilon _{2}}x_{1}^{\epsilon _{1}}\;.$$ 
By the definition of an augmented rack \ref{def3}, the action of the operator group on itself is taken to be conjugation. It follows that $a^{(a^{k})}\equiv (a^{k})^{-1}a(a^{k})\equiv a$ for any $k\in \mathbb{N}$. The presentation thus simplifies to
\begin{xalignat}{1} \label{pres1}
& [\{x_{1},\ldots ,x_{m+d}\},\{a\}:\{\textrm{crossing relations of $L$}, x_{i}^{a}=x_{m+i} \textrm{ for $i=1,\ldots ,d$}\},\\
& \{a^{p}\equiv x_{d}^{\epsilon _{d}}\ldots x_{2}^{\epsilon _{2}}x_{1}^{\epsilon _{1}}\}] \nonumber
\end{xalignat}
There is a class of links in $L(p,1)$ which we need to consider separately: 
\begin{definition} \label{def9}A link $L\subset L(p,1)$ is \textbf{affine} (sometimes also called local) if it is contained in a 3-ball $B^{3}\subset L(p,1)$.  
\end{definition}
If $L$ is an affine link, then its diagram $D$ may be drawn without intersections between $L$ and the surgery curve $U$. Thus $d=0$ and the presentation of $R(D)$ is very simple: appart from the crossing relations of $L$, there is only one operator relation $a^{p}\equiv 1$. 

We summarize our results in the following proposition.  
\begin{proposition} \label{prop1} Let $L$ be a framed link in $L(p,1)$, given by the diagram in the Figure \ref{fig:diagram}. Then the augmented fundamental rack of $L$ is given by a presentation 
\begin{xalignat*}{1}
& [\{x_{1},\ldots ,x_{m+d}\},\{a\}:\{\textrm{crossing relations of $L$}, x_{i}^{a}=x_{m+i} \textrm{ for $i=1,\ldots ,d$}\},\{a^{p}\equiv x_{d}^{\epsilon _{d}}\ldots x_{2}^{\epsilon _{2}}x_{1}^{\epsilon _{1}}\}] 
\end{xalignat*} if $d>0$, and 
\begin{xalignat*}{1} 
& [\{x_{1},\ldots ,x_{m+d}\},\{a\}:\{\textrm{crossing relations of $L$}\},\{a^{p}\equiv 1\}] 
\end{xalignat*} otherwise. 
\end{proposition}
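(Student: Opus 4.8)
The proposition records the outcome of the reduction sketched before it, so the plan is to begin from the general Fenn--Rourke presentation produced by items (i)--(iv) for the diagram $D$ of Figure~\ref{fig:diagram}, and then carry out the two reductions rigorously: eliminating the operator generators $a_1,\ldots,a_{d-1}$, and collapsing every conjugate $a^{(a^k)}$ to $a$. Both cases $d>0$ and $d=0$ fall out of the same computation.

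For $d>0$ I would read the $d$ relators $\overline{x}_i^{\epsilon_i}a_i x_i^{\epsilon_i}\equiv a_{i-1}$ (with $a_0:=a^{(a^p)}$, and the last one $\overline{x}_d^{\epsilon_d}a x_d^{\epsilon_d}\equiv a_{d-1}$) as successive definitions of $a_{d-1},\ldots,a_1$ as conjugates of $a$; this lets me delete $a_1,\ldots,a_{d-1}$ from the generating set and leaves the single chain relation $a^{(a^p)}\equiv \overline{x}_1^{\epsilon_1}\cdots\overline{x}_d^{\epsilon_d}\,a\,x_d^{\epsilon_d}\cdots x_1^{\epsilon_1}$. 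The engine of the simplification is Definition~\ref{def3}: because the operator group acts on itself by conjugation, $a^{(a^k)}=(a^k)^{-1}a(a^k)=a$, hence $a^{(a^k)}\equiv a$ for all $k$. Applied to the primary relators $x_i^{a^{(a^p)}}=x_{m+i}$ this yields at once $x_i^{a}=x_{m+i}$ for $i=1,\ldots,d$.

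It then remains to produce the single operator relation. Here I would apply $a^{(a^k)}\equiv a$ to the reading-around relator $a\,a^{a}\cdots a^{(a^{p-1})}\,\overline{x}_1^{\epsilon_1}\cdots\overline{x}_d^{\epsilon_d}\equiv 1$, collapsing its first $p$ factors to $a^{p}$ and obtaining $a^{p}\equiv x_d^{\epsilon_d}\cdots x_1^{\epsilon_1}$, exactly the relation in the presentation~(\ref{pres1}). The delicate point, and the step I expect to require the most care, is to confirm that no second operator relation survives: after the same substitution the chain relation becomes $a\equiv w^{-1}aw$ with $w=x_d^{\epsilon_d}\cdots x_1^{\epsilon_1}$, and I must show this is now automatic. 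Writing $w\equiv a^{p}$ from the relation just derived and unwinding the definition of operator equivalence, one checks $z^{w^{-1}aw}=z^{a}$ for every element $z$, so the chain relation is redundant and may be dropped. This leaves precisely the crossing relations of $L$, the relations $x_i^{a}=x_{m+i}$, and $a^{p}\equiv x_d^{\epsilon_d}\cdots x_1^{\epsilon_1}$, as claimed.

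Finally, the affine case of Definition~\ref{def9} is the degenerate specialization: when $d=0$ the diagram has no crossings between $L$ and $U$, so no primary relators $x_i^{a}=x_{m+i}$ and no chain relators are created, and reading around $U$ leaves only $a\,a^{a}\cdots a^{(a^{p-1})}\equiv 1$; the same identity $a^{(a^k)}\equiv a$ reduces this to $a^{p}\equiv 1$. Combining with the crossing relations of $L$ gives the second presentation in the statement.
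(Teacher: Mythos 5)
Your argument is correct and follows essentially the same route as the paper: start from the Fenn--Rourke general presentation read off the diagram, eliminate $a_{1},\ldots ,a_{d-1}$ through the chain of operator relators, and use $a^{(a^{k})}\equiv a$ (conjugation within the operator group) to reduce both the primary relators $x_{i}^{a^{(a^{p})}}=x_{m+i}$ and the reading-around relator to the stated form, with $d=0$ as the degenerate case. The one point where you go beyond the text is the explicit check that the surviving chain relation $a\equiv w^{-1}aw$ with $w=x_{d}^{\epsilon _{d}}\cdots x_{1}^{\epsilon _{1}}$ is a consequence of $a^{p}\equiv w$ and may be dropped; the paper discards it silently here (and in fact retains it as an explicit relator in the presentation used in Section~\ref{sec4}), so your added verification is a welcome clarification rather than a divergence.
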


\begin{remark} Erasing the surgery curve $U$ in our diagram, we obtain an ordinary diagram of a classical link in $S^{3}$. The presentation of the fundamental rack of this classical link may be given as  
\begin{xalignat*}{1}
& [\{x_{1},\ldots ,x_{m+d}\}:\, \{\textrm{crossing relations of $L$},\, x_{i}=x_{m+i}\textrm{ for $i=1,\ldots ,d$}\}]\;,
\end{xalignat*} to see our presentation as a generalization of the classical case. By removing the unnecessary generators $x_{m+1},\ldots ,x_{m+d}$ we obtain the presentation of Example \ref{ex1}. 
\end{remark}

If a framed link in $L(p,1)$ is given by the diagram $D$, then its fundamental rack $R(D)$ has the general presentation, given in the proposition \ref{prop1}. The operator generator $a$ is intrinsic to the ambient space $L(p,1)$ and is not itself an element of $R(D)$; rather it defines an action $A\colon R(D)\to R(D)$ by $A(x)=x^{a}$. 
Define a mapping $F\colon R(D)\to R(D)$ by  
\[
 F(x) = 
  \begin{cases} 
   x\tr x_{d}^{\epsilon _{d}}\tr \ldots \tr x_{1}^{\epsilon _{1}} & \text{if } d > 0\;, \\
   x       & \text{if } d = 0\;.
  \end{cases}
\]

\begin{lemma}\label{lemma2}Let $R(D)$ be given by the general presentation in the proposition \ref{prop1}. Then $A$ and $F$ are rack automorphisms of $R(D)$ for which $A^{p}=F$. 
\end{lemma}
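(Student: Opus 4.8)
The plan is to verify three things in turn: that $F$ is a rack automorphism, that $A$ is a rack automorphism, and that the two are related by $A^{p}=F$. I would dispose of $F$ first, since it is built from the inner maps of the rack. For $d=0$ we have $F=\mathrm{id}$, which is trivially an automorphism, so assume $d>0$. For a fixed $y$ the map $f_{y}(x)=x\tr y$ is a bijection by the first rack axiom, and it is a rack homomorphism because the second rack axiom reads exactly $(x\tr z)\tr y=(x\tr y)\tr (z\tr y)$, i.e. $f_{y}(x\tr z)=f_{y}(x)\tr f_{y}(z)$; hence each $f_{y}$ and its inverse $f_{y}^{-1}(x)=x\tr\overline{y}$ is a rack automorphism. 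Since $F$ is by definition the left-associated composite $f_{x_{1}^{\epsilon_{1}}}\circ\cdots\circ f_{x_{d}^{\epsilon_{d}}}$ of such maps (with $f_{x_{i}^{-1}}$ read as $f_{x_{i}}^{-1}$), it is an automorphism as a composite of automorphisms.

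Next I would check that $A$ is a rack automorphism. Here $A(x)=x^{a}$ is the action of the operator $a$, which is well defined on $R(D)$ because the defining congruence of Definition \ref{def7} is set up so that the presented object is an augmented rack on which the operator action descends. Its bijectivity is clear, with inverse the action of $a^{-1}$. The homomorphism property is exactly the augmentation identity of Definition \ref{def3}: using the group-action rule $(x^{g})^{h}=x^{gh}$ together with $\partial(y^{a})=a^{-1}(\partial y)a$, one finds $A(x\tr y)=(x^{\partial y})^{a}=x^{(\partial y)a}$ on the one hand and $A(x)\tr A(y)=(x^{a})^{\partial(y^{a})}=x^{a\cdot a^{-1}(\partial y)a}=x^{(\partial y)a}$ on the other, so the two coincide.

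Finally, for the identity $A^{p}=F$ I would push everything into the operator group. The key translation is that $x\tr x_{i}^{\epsilon_{i}}=x^{\partial(x_{i})^{\epsilon_{i}}}$, and that $\partial(x_{i})=x_{i}$ as an element of the operator group, so the iterated $\tr$ defining $F$ collapses to a single group action:
\[
F(x)=x^{\partial(x_{d})^{\epsilon_{d}}\cdots\partial(x_{1})^{\epsilon_{1}}}=x^{x_{d}^{\epsilon_{d}}\cdots x_{1}^{\epsilon_{1}}}\;,
\]
while $A^{p}(x)=x^{a^{p}}$. The operator relation of Proposition \ref{prop1}, namely $a^{p}\equiv x_{d}^{\epsilon_{d}}\cdots x_{1}^{\epsilon_{1}}$ (resp. $a^{p}\equiv 1$ when $d=0$), asserts precisely that these two operators are operator equivalent; by the third clause of Definition \ref{def7} operator-equivalent words act identically on every element of $R(D)$, whence $A^{p}(x)=F(x)$ for all $x$.

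I expect the main obstacle to be purely bookkeeping rather than conceptual: matching the left-to-right order of the iterated $\tr$ in the definition of $F$ with the order of the word $x_{d}^{\epsilon_{d}}\cdots x_{1}^{\epsilon_{1}}$ in the operator relation, and keeping straight that each step $\tr x_{i}^{\epsilon_{i}}$ amounts to right-multiplying the accumulated operator by $\partial(x_{i})^{\epsilon_{i}}$ (so that the innermost application corresponds to the leftmost group factor). Once this dictionary is fixed, together with the minor point that the operator action genuinely descends to the quotient $R(D)$, all three claims follow with no further computation.
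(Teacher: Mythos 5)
Your proposal is correct and follows essentially the same route as the paper: $F$ is an automorphism because it is a composite of the inner bijections $f_{y}$ (first and second rack axioms), $A$ is an automorphism by the computation $(x^{a})\tr(y^{a})=x^{aa^{-1}(\partial y)a}=x^{(\partial y)a}=A(x\tr y)$ coming from the extended free rack / augmentation identity, and $A^{p}=F$ follows directly from the operator relation $a^{p}\equiv x_{d}^{\epsilon_{d}}\cdots x_{1}^{\epsilon_{1}}$ (resp.\ $a^{p}\equiv 1$ when $d=0$) together with the fact that operator-equivalent words act identically on the quotient. Your extra care about the order of the iterated $\tr$ matching the word $x_{d}^{\epsilon_{d}}\cdots x_{1}^{\epsilon_{1}}$ is resolved exactly as you describe and is consistent with the paper.
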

\begin{proof} 
By the definition of the rack operation in the extended free rack \ref{def6}, we have $$A(x)\tr A(y)=(x^{a})\tr (y^{a})=x^{aa^{-1}ya}=x^{ya}=A(x\tr y)\;.$$ By the second rack axiom we have $F(x\tr y)=F(x)\tr F(y)$. It follows from the first rack axiom that $F$ is a bijection, whose inverse for $d>0$ is given by $F^{-1}(x)=x\tr (\overline{x}_{1}^{\epsilon _{1}}\ldots \overline{x}_{d}^{\epsilon _{d}})$, while $A$ is a bijection with the inverse $A^{-1}(x)=x^{\overline{a}}$. 

If $d=0$, then $F=A^{p}=id_{R(D)}$ by the proposition \ref{prop1}, thus the remaining statements of the lemma are obvious.  

Now suppose that $d>0$. The operator relation $a^{p}\equiv x_{d}^{\epsilon _{d}}\ldots x_{1}^{\epsilon _{1}}$ implies that $A^{p}(x)=x^{a^{p}}=F(x)$ for any $x\in R(D)$. 
\end{proof}

The operator relations of the presentation \eqref{pres1} imply that the fundamental augmented rack $R(D)$ actually has a finite primary presentation. 
\begin{proposition} \label{prop2} Let $S=\{(x_{i},k)|\, 1\leq i\leq m+d,\, 0\leq k\leq p-1\}=\{x_{1},x_{2},\ldots ,x_{m+d}\}\times \ZZ _{p}$. The fundamental augmented rack $R(D)$ has a primary presentation 
\begin{xalignat}{1}\label{pres2}
& [S:\, (x_{i},k)\tr (x_{j},k)=(x_{l},k)\textrm{ for every crossing relation $x_{i}\tr x_{j}=x_{l}$ and $k=0,\ldots ,p-1$},\\
& \, (x_{m+i},k)=(x_{i},k+1), \textrm{ for $i=1,\ldots ,d$ and $k=0,\ldots ,p-2$},\, \nonumber \\
& (x_{m+i},p-1)=(x_{i},0)\tr (x_{d}^{\epsilon _{d}},0)\tr \ldots \tr (x_{1}^{\epsilon _{1}},0)\textrm{ for $i=1,\ldots ,d$}]\nonumber
\end{xalignat} if $d>0$, and 
\begin{xalignat*}{1}
& [S:\, (x_{i},k)\tr (x_{j},k)=(x_{l},k)\textrm{ for every crossing relation $x_{i}\tr x_{j}=x_{l}$ and $k=0,\ldots ,p-1$}]
\end{xalignat*} if $d=0$. 
\end{proposition}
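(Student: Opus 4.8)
The plan is to realise \eqref{pres2} as the same rack as \eqref{pres1} by a change of generators that trades the single operator $a$ for the $p$-fold family of its translates: I read the symbol $(x_i,k)$ as a name for the element $x_i^{a^k}=A^k(x_i)$ of $R(D)$, and I aim to produce two mutually inverse rack homomorphisms between the rack $R'$ presented by \eqref{pres2} and the underlying rack of $R(D)$. First I would define $\phi\colon R'\to R(D)$ on generators by $\phi(x_i,k)=A^k(x_i)$ and check that it respects the three families of relators of \eqref{pres2}. This is where Lemma \ref{lemma2} does the work: since $A$ is a rack automorphism, applying $A^k$ to a crossing relation $x_i\tr x_j=x_l$ yields $(x_i,k)\tr(x_j,k)=(x_l,k)$; the relation $x_{m+i}=x_i^{a}$ of Proposition \ref{prop1} gives $A^k(x_{m+i})=A^{k+1}(x_i)$, i.e. the shift relator $(x_{m+i},k)=(x_i,k+1)$; and the identity $A^p=F$ turns $A^{p-1}(x_{m+i})=A^p(x_i)=F(x_i)$ into the closing relator $(x_{m+i},p-1)=(x_i,0)\tr(x_d^{\epsilon_d},0)\tr\ldots\tr(x_1^{\epsilon_1},0)$. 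All three checks are routine once Lemma \ref{lemma2} is available.

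Next I would prove that $\phi$ is surjective, which I expect to be the clean part. Every element of $R(D)$ has the form $x_i^{w}$ for a word $w$ in the generators $x_1,\ldots,x_{m+d},a$, and I would induct on the length of $w$. If $w=w'a^{\pm1}$ then $x_i^{w}=A^{\pm1}(x_i^{w'})$, and since $A^{\pm1}$ is an automorphism it carries the inductively obtained rack word in the $x_j^{a^l}$ to another such word with the exponents shifted; if $w=w'x_j^{\pm1}$ then $x_i^{w}=(x_i^{w'})\tr x_j^{\pm1}$ because $\partial(x_j)=x_j$. Finally $x_i^{a^{k+p}}=F(x_i^{a^k})$ (and its inverse for negative exponents) reduces every power of $a$ modulo $p$, so the finite set $\{A^k(x_i)\mid 0\le k\le p-1,\ 1\le i\le m+d\}$ generates $R(D)$ as a rack and $\phi$ is onto.

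For injectivity I would construct a one-sided inverse $\psi\colon R(D)\to R'$ via the universal property of the general presentation \eqref{pres1}: send $x_i\mapsto(x_i,0)$ and let the operator $a$ act by the shift endomorphism $A'$ of $R'$ defined by $A'(x_i,k)=(x_i,k+1)$ for $k\le p-2$ and $A'(x_i,p-1)=F'(x_i,0)$, where $F'(y)=y\tr(x_d^{\epsilon_d},0)\tr\ldots\tr(x_1^{\epsilon_1},0)$. Granting that $A'$ is a well-defined rack automorphism with $(A')^p=F'$, each relator of \eqref{pres1} is satisfied by $\psi$ (the crossing relations land at level $0$, the relation $x_i^{a}=x_{m+i}$ becomes the shift relator at $k=0$, and $a^p\equiv x_d^{\epsilon_d}\ldots x_1^{\epsilon_1}$ becomes $(A')^p=F'$); one then reads off $\psi\circ\phi=\mathrm{id}_{R'}$ and $\phi\circ\psi=\mathrm{id}_{R(D)}$ on generators, so the two maps are mutually inverse isomorphisms.

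The hard part will be precisely the well-definedness of $A'$, i.e. that the shift respects the closing relator. Pushing $A'$ through $(x_{m+i},p-1)=(x_i,0)\tr(x_d^{\epsilon_d},0)\tr\ldots\tr(x_1^{\epsilon_1},0)$ and rewriting $(x_{m+i},0)=(x_i,1)$ by the shift relator reduces the check to the operator equivalence of the level-$0$ and level-$1$ copies of the relator word $x_d^{\epsilon_d}\ldots x_1^{\epsilon_1}$. In $R(D)$ this is the trivial identity $a^{-1}a^{p}a\equiv a^{p}$, since conjugation by $a$ fixes $a^p$; but inside the primary presentation $R'$, where the symbol $a$ is no longer present, it must be derived from the relators of \eqref{pres2} together with the rack axioms and Lemma \ref{lemma1}. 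I expect this conjugation-invariance of the surgery relator to be the only genuinely delicate point; once it is secured, the remaining verifications $\bigl((A')^p=F'$ and the two composite identities$\bigr)$ are formal.
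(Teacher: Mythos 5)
Your first two paragraphs are, up to notation, the paper's entire proof. The paper establishes that the elements $x_i^{(a^k)}$ for $0\le k\le p-1$ generate $R(D)$, because the operator relation lets one rewrite $x_j^{(a^p)}$ as $x_j\tr x_d^{\epsilon_d}\tr\ldots\tr x_1^{\epsilon_1}$, and then observes that, since $A$ is a rack automorphism (Lemma \ref{lemma2}), each primary relation of \eqref{pres1} is transported to the $p$ listed relations of \eqref{pres2}. Your map $\phi$ and your surjectivity argument are exactly this, and those parts are correct.

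Where you go beyond the paper is in demanding the converse inclusion --- that \eqref{pres2} does not present a strictly larger rack --- via the inverse map $\psi$. The paper does not address this direction at all, so you have correctly identified something the source leaves implicit. But your proposal does not close the step you yourself flag as ``the hard part'': the well-definedness of the shift $A'$ on the primarily presented rack, which (as you note, and as one also needs for $(A')^p=F'$) reduces to showing that the level-$0$ and level-$1$ copies of the word $x_d^{\epsilon_d}\ldots x_1^{\epsilon_1}$ are operator equivalent, at least on the elements $(x_i,1)$. This is a genuine gap rather than a routine verification: in $R(D)$ the identity is the conjugation-invariance $a^{-1}a^{p}a\equiv a^{p}$, but $a$ is not available in the primary presentation, and the relators of \eqref{pres2} connect consecutive levels only through the shift relators $(x_{m+i},k)=(x_i,k+1)$ and the closing relators. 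The missing input appears to be geometric rather than formal: one would need the boundary relation of the tangle $t_L$ --- that the crossing relations already force the product of meridians of the outgoing ends $x_{m+d}^{\epsilon_d}\cdots x_{m+1}^{\epsilon_1}$ to be operator equivalent to a suitable conjugate of the product of the incoming ends $x_d^{\epsilon_d}\cdots x_1^{\epsilon_1}$ --- combined with the shift and closing relators. Until that (or some substitute) is supplied, $\psi$ is not constructed and injectivity of $\phi$ is not established. In short: the completed portion of your argument coincides with the paper's proof; the additional portion identifies the right obstruction but leaves it open, so as a self-contained proof the proposal is incomplete.
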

\begin{proof} $R(D)$ has a general presentation, given by the proposition \ref{prop1}. Every element $x\in R(D)$ is of the form $x_{i}^{w}$ for some word $w\in F(\{x_{1},\ldots ,x_{m+d},a\})$. Thus $x$ may be written as a finite product of the elements $x_{j}^{(a^{k})}$ for $1\leq j\leq m+d$ and $0\leq k\leq p-1$, since the second operator relation implies that $x_{j}^{(a^{p})}=x_{j}\tr x_{d}^{\epsilon _{d}}\tr \ldots \tr x_{1}^{\epsilon _{1}}$. Denote by $(x_{i},k)\in \{x_{1},x_{2},\ldots ,x_{m+d}\}\times \ZZ _{p}$ the generator of $R(D)$, corresponding to the element $x_{i}^{(a^{k})}$. 

We have shown in the lemma \ref{lemma2} that $A$ is a rack automomorphism, thus we have $A(x\tr z)=A(x)\tr A(z)$ for any $x,z\in S$. Each primary relation of \eqref{pres1} therefore induces $p$ relations of the primary presentation \eqref{pres2}. Every crossing relation $x_{i}\tr x_{j}=x_{l}$ induces relations $(x_{i},k)\tr (x_{j},k)=(x_{l},k)$ for $k=0,\ldots ,p-1$. For  $i=1\ldots ,d$, the relation $x_{m+i}=x_{i}^{a}$ of the presentation \eqref{pres1} induces relations $(x_{m+i},k)=(x_{i},k+1)$ for $k=0,\ldots ,p-2$ and the final relation $(x_{m+i},p-1)=(x_{i},0)\tr (x_{d}^{\epsilon _{d}},0)\tr \ldots \tr (x_{1}^{\epsilon _{1}},0)$.
\end{proof}

\begin{remark} If the fundamental rack $R(D)$ is given by the presentation \eqref{pres2}, then the action of the rack automorphism $A\colon R(D)\to R(D)$ is given by \[A(x_{i},k) = \left\{
  \begin{array}{lr}
    (x_{i},k+1) & \textrm{ for } 0\leq k<(p-1),\\
   (x_{i},0)\tr (x_{d}^{\epsilon _{d}},0)\tr \ldots \tr (x_{1}^{\epsilon _{1}},0) & \textrm{ for $k=p-1$. }
  \end{array}
\right.
\] 
\end{remark}
 
\begin{subsection}{Labeling the homomorphisms from $R(D)$ to a finite rack}\label{subs21}
Let $X$ be a finite rack. We will study the set of homomorphisms $Hom(R(D),X)$, where $R(D)$ denotes the augmented fundamental rack of a framed link given by the diagram $D$. 

If $L$ is a classical link, then its fundamental rack is a quotient of the free rack $FR(\{x_{1},\ldots ,x_{m}\})$ whose generators correspond to the arcs of the diagram $D$. Thus, any rack homomorphism $f\colon R(D)\to X$ is completely specified by the images $f(x_{i})$, $i=1,\ldots ,m$. This is usually thought of as coloring each arc $x_{i}$ by a color from $X$. A function $f\colon \{x_{1},\ldots ,x_{m}\}\to X$ defines a coloring if and only if it preserves the crossing relation at every crossing of the diagram $D$. 

Now let $L$ be a framed link in $L(p,1)$, whose diagram $D$ is labeled as shown in the Figure \ref{fig:diagram}. By the proposition \ref{prop2}, the fundamental rack $R(D)$ is a quotient of the free rack $$FR\left (\{(x_{i},k)|\, 1\leq i\leq m+d,\, 0\leq k\leq p-1\}\right )\;,$$ which implies that any rack homomorphism $f\colon R(D)\to X$ is completely determined by the images $f((x_{i},k))$ for $i=1,\ldots ,m+d$ and $k=0,1,\ldots ,p-1$. We may think of this as choosing for each arc $x_{i}$ of the diagram $D$ a $p$-tuple of colors from $X$, or, equivalently, as choosing $p$ colorings of the diagram $D$ by the colors from $X$. Denote again the generating set of $R(D)$ by $S=\{x_{1},\ldots ,x_{m+d}\}\times \ZZ _{p}$. A function $f\colon S\to X$ defines a vector function $(f_{0},\ldots ,f_{p-1})\colon \{x_{1},\ldots ,x_{m+d}\}\to X^{p}$ by $f_{k}(x_{i})=f((x_{i},k))$. When will this function define a coloring? 

\begin{lemma}\label{lemma3} If a map $f\colon R(D)\to X$ satisfies the condition $f(x\tr y)=f(x)\tr f(y)$ for any $x,y\in R(D)$, then for any pair of elements $z,w\in R(D)$ the following implication holds: $$f(A^{k}(z))=f(A^{k}(w))\textrm{ for }0\leq k\leq p-1\quad \Rightarrow \quad f(A^{k}(z))=f(A^{k}(w))\textrm{ for every }k\in \mathbb{N}\;.$$ 
\end{lemma}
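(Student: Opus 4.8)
The plan is to show that $f$ intertwines the $p$-th power map $A^{p}$ with a single fixed self-map of the target rack $X$, and then to run an induction in steps of size $p$ whose base cases are exactly the hypothesized equalities for $k=0,\ldots,p-1$. Throughout I would use only the assumed condition $f(x\tr y)=f(x)\tr f(y)$, not the full homomorphism property.

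First I would record that an operation-preserving $f$ also respects the inverse operation, i.e. $f(z\tr\overline{y})=f(z)\tr\overline{f(y)}$ for all $y,z\in R(D)$. Indeed, setting $u=z\tr\overline{y}$ so that $u\tr y=z$, the hypothesis gives $f(u)\tr f(y)=f(z)$, and the first rack axiom in $X$ (uniqueness of the pre-image under $f_{f(y)}$) forces $f(u)=f(z)\tr\overline{f(y)}$. Next I would invoke Lemma \ref{lemma2}, which says $A^{p}=F$ with $F(x)=x\tr x_{d}^{\epsilon_{d}}\tr\ldots\tr x_{1}^{\epsilon_{1}}$. Iterating condition 1 and the inverse-operation identity just established (the latter handling the crossings with $\epsilon_{i}=-1$) yields
\[
f(F(x))=f(x)\tr f(x_{d})^{\epsilon_{d}}\tr\ldots\tr f(x_{1})^{\epsilon_{1}},
\]
where the exponent $-1$ is read as $\tr\overline{\,\cdot\,}$. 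Defining $G\colon X\to X$ by $G(c)=c\tr f(x_{d})^{\epsilon_{d}}\tr\ldots\tr f(x_{1})^{\epsilon_{1}}$ — a well-defined self-map of $X$ built from the fixed colors $f(x_{1}),\ldots,f(x_{d})$ — this computation reads $f\circ A^{p}=f\circ F=G\circ f$.

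With the intertwining relation in hand, the rest is mechanical. Since $A^{p}=F$ is a power of $A$ it commutes with $A^{k}$, so $A^{k+p}=F\circ A^{k}$, and applying $f\circ F=G\circ f$ at the element $A^{k}(z)$ gives $f(A^{k+p}(z))=G\bigl(f(A^{k}(z))\bigr)$, and likewise for $w$. Thus an equality $f(A^{k}(z))=f(A^{k}(w))$ propagates to $f(A^{k+p}(z))=f(A^{k+p}(w))$ simply by applying the function $G$ to both sides. I would then conclude by induction on $k$ in blocks modulo $p$: the cases $k=0,\ldots,p-1$ are the hypothesis, and the propagation step carries each of them to all larger $k\equiv k\pmod p$, covering every $k\in\mathbb{N}$. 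When $d=0$ the product is empty, so $F=\mathrm{id}$ and $G=\mathrm{id}$; then $A$ has order dividing $p$ and the statement is immediate, and the same argument applies verbatim.

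The hard part will be the first step, namely ensuring that $f$ carries $F$ to an honest self-map $G$ of $X$ despite the presence of the inverse operations coming from the crossings with $\epsilon_{i}=-1$; this is exactly what the identity $f(z\tr\overline{y})=f(z)\tr\overline{f(y)}$ secures. Once the relation $f\circ A^{p}=G\circ f$ is established, the induction is routine and requires no further input beyond the commutativity of $A^{k}$ with $A^{p}$.
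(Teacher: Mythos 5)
Your proposal is correct and follows essentially the same route as the paper: both use Lemma \ref{lemma2} to write $A^{K}=F\circ A^{r}$ for $K=p+r$, push $f$ through $F$ to get $f(A^{K}(z))=f(A^{r}(z))\tr f(x_{d}^{\epsilon_{d}})\tr\ldots\tr f(x_{1}^{\epsilon_{1}})$, and induct. Your explicit verification that $f$ also respects the inverse operation $\tr\overline{y}$ (needed when some $\epsilon_{i}=-1$) is a detail the paper leaves implicit, but it does not change the argument.
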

\begin{proof} Suppose that $f(A^{k}(z))=f(A^{k}(w))$ for every $0\leq k\leq p-1$. For $K\geq p$, we may write $K=p+r$ for some $r<K$ and use the lemma \ref{lemma2} to calculate 
\begin{xalignat*}{1}
& f(A^{K}(z))=f(A^{r}(F(z)))=f(F(A^{r}(z)))=f(A^{r}(z))\tr f(x_{d}^{\epsilon _{d}})\tr \ldots \tr f(x_{1}^{\epsilon _{1}})\\
& f(A^{K}(w))=f(A^{r}(F(w)))=f(F(A^{r}(w)))=f(A^{r}(w))\tr f(x_{d}^{\epsilon _{d}})\tr \ldots \tr f(x_{1}^{\epsilon _{1}})
\end{xalignat*} This completes the proof by induction.
\end{proof}

\begin{lemma}\label{lemma4} If $f\colon R(D)\to X$ is a rack homomorphism, then we have  \[f(A^{k}(x_{i},j)) = \left\{
  \begin{array}{lr}
    f_{j+k}(x_{i}) & \textrm{ if } j+k\leq (p-1),\\
    f_{(j+k)\mo \, p}(x_{i})\tr f_{(j+k)\mo \, p}(x_{d}^{\epsilon _{d}})\tr \ldots \tr f_{(j+k)\mo \,p}(x_{1}^{\epsilon _{1}}) & \textrm{ otherwise. }
  \end{array}
\right.
\] for any $1\leq i\leq m+d$ and $0\leq j,k\leq p-1$.
\end{lemma}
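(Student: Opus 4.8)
The plan is to reduce every quantity $f(A^{k}(x_{i},j))$ to an evaluation of $f$ on a single explicit element of $R(D)$, obtained by tracking the total shift $j+k$ through the action of $A$ recorded in the Remark following Proposition \ref{prop2}. The starting observation is that, since $0\le j\le p-1$, the rule $A(x_{i},s)=(x_{i},s+1)$ (valid for $s<p-1$) gives $(x_{i},j)=A^{j}(x_{i},0)$, so that $A^{k}(x_{i},j)=A^{j+k}(x_{i},0)$. The entire computation therefore depends only on the single integer $j+k$, which ranges over $0\le j+k\le 2p-2$, and I would split into the two cases of the statement according to whether $j+k$ stays below $p$ or wraps around.

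In the first case $j+k\le p-1$, I would iterate $A(x_{i},s)=(x_{i},s+1)$ exactly $j+k$ times (all intermediate indices remaining below $p-1$) to get $A^{j+k}(x_{i},0)=(x_{i},j+k)$; applying $f$ then yields $f_{j+k}(x_{i})$ directly from $f_{s}(x_{i})=f((x_{i},s))$. In the second case $j+k\ge p$, I would write $j+k=p+r$ with $0\le r\le p-2$ and use Lemma \ref{lemma2}, where $A^{p}=F$, to factor $A^{j+k}=A^{r}\circ A^{p}$. The iterate $A^{p}(x_{i},0)=A(x_{i},p-1)$ is computed by the Remark to be $(x_{i},0)\tr (x_{d}^{\epsilon_{d}},0)\tr \ldots \tr (x_{1}^{\epsilon_{1}},0)$; then, since $A^{r}$ is a rack automorphism, I would distribute $A^{r}$ across this rack product and use $A^{r}(x_{l},0)=(x_{l},r)$ (again valid as $r\le p-2$) to obtain the single element $(x_{i},r)\tr (x_{d}^{\epsilon_{d}},r)\tr \ldots \tr (x_{1}^{\epsilon_{1}},r)$. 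Applying the homomorphism $f$ and noting $r=(j+k)\,\mo\,p$ produces exactly the second line of the claimed formula.

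The point demanding the most care, and the place I expect any trouble to sit, is the bookkeeping around the signed factors $x_{l}^{\epsilon_{l}}$: for indices with $\epsilon_{l}=-1$ the product involves the inverse operation $\tr\,\overline{(\cdot)}$, so I must verify that both $A^{r}$ and $f$ commute with this inverse operation and not merely with $\tr$. This follows from the first rack axiom: if $w\tr y=z$, then applying $A^{r}$ (respectively $f$) to both sides and using compatibility with $\tr$ forces $A^{r}(z)\tr \overline{A^{r}(y)}=A^{r}(w)$ (respectively the analogous identity for $f$), so the inverse operation is preserved as well. With that remark in hand the distribution step of the wrap-around case is legitimate, and the two displayed evaluations close the proof.
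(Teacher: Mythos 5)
Your proof is correct and follows essentially the same route as the paper, whose own proof is a one-line appeal to Lemma \ref{lemma2} and the definition of the $f_{k}$: you reduce $A^{k}(x_{i},j)$ to $A^{j+k}(x_{i},0)$, split on whether $j+k$ wraps past $p$, and in the wrap-around case factor $A^{j+k}=A^{r}\circ F$ and distribute the automorphism $A^{r}$ and the homomorphism $f$ across the rack product. Your extra check that both maps also respect the inverse operation $\tr\,\overline{(\cdot)}$ (needed when some $\epsilon_{l}=-1$) is a detail the paper leaves implicit, and your argument for it via the first rack axiom is sound.
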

\begin{proof} The equality follows from the lemma \ref{lemma2} and the definition of $f$.  
\end{proof}

\begin{proposition} \label{prop3} Let $X$ be a finite rack. Let $L$ be a framed affine link in $L(p,1)$, whose diagram $D$ is labeled as shown in the Figure \ref{fig:diagram} (for $d=0$). A function $f\colon S\to X$ defines a rack homomorphism from $R(D)$ to $X$ if and only if the following conditions are satisfied:
\begin{itemize}
\item[(i)]$f_{k}(x_{i})\tr f_{k}(x_{j})=f_{k}(x_{l})$ for every crossing relation $x_{i}\tr x_{j}=x_{l}$ of the link $L$ and for $k=0,\ldots ,p-1$. 
\item[(ii)]$f(x)=f(y)\Leftrightarrow f(A^{k}(x))=f(A^{k}(y))$ for any $x,y\in S$ and for any $0\leq k\leq p-1$.
\end{itemize}
\end{proposition}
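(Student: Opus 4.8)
The plan is to verify directly the two defining conditions of a rack homomorphism of augmented racks (Definition \ref{def4}), using the finite primary presentation of $R(D)$ in Proposition \ref{prop2} to reduce every statement to the generating set $S$. The forward implication is immediate: if $f$ is a rack homomorphism, then condition (1) of Definition \ref{def4} applied to the generators $(x_{i},k),(x_{j},k)\in S$, together with the crossing relations $(x_{i},k)\tr (x_{j},k)=(x_{l},k)$ of the presentation \eqref{pres2}, yields (i); and condition (2) applied with $g=a^{k}$ (so that $x^{g}=A^{k}(x)$) and restricted to $x,y\in S$ yields (ii).

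For the converse I would first establish condition (1). By the universal property of the free rack $FR(S)$, the set map $f\colon S\to X$ extends to a unique map $\tilde f\colon FR(S)\to X$ preserving $\tr $: for a reduced word $w$ one applies to $f(s)$ the bijections $t\mapsto t\tr c$ and their inverses, letter by letter, which is well defined since these maps are mutually inverse on $X$. The relation $\{(u,v):\tilde f(u)=\tilde f(v)\}$ is a congruence on $FR(S)$, because $\tilde f$ preserves $\tr $. Condition (i) says exactly that this congruence contains the defining primary relations of \eqref{pres2} (in the affine case these are only the crossing relations), so $\tilde f$ descends to a well-defined $\bar f\colon R(D)\to X$ with $\bar f(x\tr y)=\bar f(x)\tr \bar f(y)$.

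The substantive step, and the one I expect to be the main obstacle, is condition (2): $\bar f(x)=\bar f(y)\Leftrightarrow \bar f(x^{g})=\bar f(y^{g})$ for every $g$ in the operator group. I would reduce this to a generating set of operators. The inner operators $g=\partial (z)$ cause no difficulty, since $\bar f(x^{\partial (z)})=\bar f(x\tr z)=\bar f(x)\tr \bar f(z)$ and right translation by $\bar f(z)$ is a bijection of $X$, so condition (2) for such $g$ is forced by condition (1). The only remaining operator generator is $a$, acting as $A$, and here the input is precisely (ii): using Lemma \ref{lemma2} (that $A$ is a rack automorphism with $A^{p}=F=\mathrm{id}$ in the affine case) together with Lemma \ref{lemma3}, the biconditional (ii) for $0\le k\le p-1$ propagates to all powers $A^{k}$, and since $\langle a\rangle $ has order $p$ this accounts for every operator built from $a$.

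The delicate point in this last step, which I would treat with care, is that (ii) is assumed only on the generating set $S$, whereas condition (2) of Definition \ref{def4} is demanded for all elements of $R(D)$. The passage between the two rests on the fact that in the affine case $A$ is the rack automorphism induced by the cyclic permutation of generators $(x_{i},k)\mapsto (x_{i},k+1)$ for $k<p-1$ and $(x_{i},p-1)\mapsto (x_{i},0)$, which simultaneously permutes the defining relations of \eqref{pres2}. I would therefore recast the verification of condition (2) in terms of the $p$-tuple of colorings $(f_{0},\ldots ,f_{p-1})$ determined by $f$, on which $A$ acts by a cyclic shift as recorded in Lemma \ref{lemma4}, so that operator compatibility of $\bar f$ reduces to the cyclic-shift compatibility expressed by (ii). Making this reduction rigorous at the level of arbitrary rack words in $S$, rather than of the generators alone, is the crux of the argument.
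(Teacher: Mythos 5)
Your overall route is the same as the paper's: the forward direction is read off from the presentation of Proposition \ref{prop2} together with condition (2) of Definition \ref{def4} applied with $g=a^{k}$, and the converse extends $f$ over the free rack $FR(S)$, checks the primary relations via (i), and then verifies equivariance for the action of $a$ via (ii), iterating with $A^{p}=F=\mathrm{id}$ exactly as in Lemma \ref{lemma2}. Your reduction of condition (2) to a generating set of operators, with the observation that inner operators $\partial (z)$ are handled automatically because $t\mapsto t\tr f(z)$ is a bijection of $X$, is in fact spelled out more explicitly than in the paper, which simply asserts that it remains to check the action of $A$.

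The one step you defer --- upgrading the biconditional (ii), assumed only for pairs of generators in $S$, to condition (2) of Definition \ref{def4} for arbitrary pairs of elements of $R(D)$ --- is exactly the step the paper's own proof does not carry out either: it takes $f(x_{i},r)=f(x_{j},s)$ with $(x_{i},r),(x_{j},s)\in S$ and never returns to general elements. So relative to the paper you are not missing an argument that the paper supplies; but you are right to call it the crux, and you should be aware that it is not a formality. What one does get for free is this: the set $C$ of pairs $(x,y)$ with $f(A^{k}(x))=f(A^{k}(y))$ for all $k$ is a congruence on $R(D)$ (because $A$ is a rack automorphism and $f$ preserves $\tr$), so by (ii) and Lemma \ref{lemma3} the congruence generated by the generator pairs identified by $f$ lies in $C$. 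What does not follow is that \emph{every} pair $x,y$ with $f(x)=f(y)$ lies in that congruence: two non-generator elements can have coinciding images ``accidentally'' in $X$ (for instance $f(s)\tr f(t)=f(t)$ may hold in $X$ with $f$ injective on $S$, so that $s\tr t$ and $t$ have equal images without being congruent), and then (ii) gives no control over $f(A(s\tr t))=f(A(s))\tr f(A(t))$ versus $f(A(t))$. Your proposed recasting via the cyclic shift of Lemma \ref{lemma4} does not by itself close this gap; to finish one must either verify condition (2) only on the congruence generated by $S\times S$ (which is what the coloring count actually uses) or add it as a hypothesis, and you should say explicitly which of these you intend.
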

\begin{proof} By the proposition \ref{prop2}, the augmented fundamental rack $R(D)$ has the generating set $S=\{(x_{i},k)|\, 1\leq i\leq m+d,\, 0\leq k\leq p-1\}$ and a presentation 
\begin{xalignat*}{1}
& [S:\, (x_{i},k)\tr (x_{j},k)=(x_{l},k)\textrm{ for every crossing relation $x_{i}\tr x_{j}=x_{l}$ and $k=0,\ldots ,p-1$}]\;.
\end{xalignat*} 

Let $f\colon S\to X$ be a function that satisfies the conditions $(i)$ and $(ii)$. The function $f$ defines a rack homomorphism from the free rack $FR(S)$ to $X$ by $$f\left ((x_{i},r)\tr (x_{j},s)\right ):=f(x_{i},r)\tr f(x_{j},s)$$ for any $(x_{i},r),(x_{j},s)\in S$. Since $f$ satisfies the condition $(i)$, it follows that $f(x_{i},k)\tr f(x_{j},k)=f_{k}(x_{i})\tr f_{k}(x_{j})=f_{k}(x_{l})=f(x_{l},k)$ for every crossing relation $x_{i}\tr x_{j}=x_{l}$ and $k=0,\ldots ,p-1$. Therefore $f$ preserves all the relations of the above presentation. By definition \ref{def4}, it remains to show that $f$ preserves the action of the rack automorphism $A$. Let $f(x_{i},r)=f(x_{j},s)$ for some $(x_{i},r),(x_{j},s)\in S$. Then it follows from $(ii)$ that $f(A^{k}(x_{i},r))=f(A^{k}(x_{j},s))$ for $0\leq k\leq p-1$. Observe that by lemma \ref{lemma2} we have $A^{p}=F=Id_{R(D)}$ and thus
\begin{xalignat*}{1}
& f(A^{k+p}(x_{i},r))=f(A^{k}(F(x_{i},r)))=f(A^{k}(x_{i},r))=f(A^{k}(x_{j},s))=f(A^{k+p}(x_{j},s))
\end{xalignat*} for $k=0,\ldots ,p-1$. It follows by induction that $f(A^{n}(x_{i},r))=f(A^{n}(x_{j},s))$ for any $n\in \ZZ $, therefore $f$ preserves the action of the automorphism $A$ and thus defines a rack homomorphism from $R(D)$ to $X$.  

Conversely, let $f\colon S\to X$ be a function that defines a rack homomorphism from $R(D)$ to $X$. Then $f$ preserves all the relations of the above presentation, which implies $(i)$, and  $f$ preserves the action of $A$, which implies $(ii)$. 
\end{proof}

\begin{proposition} \label{prop4} Let $X$ be a finite rack. Let $L$ be a framed link in $L(p,1)$, which is not affine and whose diagram $D$ is labeled as shown in the Figure \ref{fig:diagram}.  A function $f\colon S\to X$ defines a rack homomorphism from $R(D)$ to $X$ if and only if the following conditions are satisfied:
\begin{itemize}
\item[(i)]$f_{k}(x_{i})\tr f_{k}(x_{j})=f_{k}(x_{l})$ for every crossing relation $x_{i}\tr x_{j}=x_{l}$ of the link $L$ and for $k=0,\ldots ,p-1$. 
\item[(ii)]$f_{0}(x_{m+i})=f_{1}(x_{i})$ for $i=1,\ldots ,d$..  
\item[(iii)]$f(x)=f(y)\Leftrightarrow f(A^{k}(x))=f(A^{k}(y))$ for any $x,y\in S$ and for any $0\leq k\leq p-1$.
\end{itemize}
\end{proposition}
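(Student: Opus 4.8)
The plan is to follow the proof of Proposition \ref{prop3} closely, replacing the affine presentation by the $d>0$ presentation \eqref{pres2} supplied by Proposition \ref{prop2}, and replacing the periodicity argument by Lemma \ref{lemma3}. Recall that \eqref{pres2} has three families of relations: the crossing relations $(x_i,k)\tr(x_j,k)=(x_l,k)$ for $k=0,\ldots,p-1$; the shift relations $(x_{m+i},k)=(x_i,k+1)$ for $k=0,\ldots,p-2$; and the single wrap-around relation $(x_{m+i},p-1)=(x_i,0)\tr(x_d^{\epsilon_d},0)\tr\ldots\tr(x_1^{\epsilon_1},0)$. For the forward implication I would take a function $f\colon S\to X$ satisfying (i)--(iii), extend it to a rack homomorphism $FR(S)\to X$ by setting $f((x_i,r)\tr(x_j,s)):=f(x_i,r)\tr f(x_j,s)$, and then verify that $f$ respects every relation of \eqref{pres2}. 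Condition (i) takes care of the crossing relations verbatim.

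The one genuinely new point, absent in the affine case, is that hypothesis (ii) provides only the single equality $f_0(x_{m+i})=f_1(x_i)$ for each $i$, whereas \eqref{pres2} demands all the shift relations together with the wrap-around relation. The key is that condition (iii) propagates this single equality along the $A$-orbit. Applying (iii) to the pair $x=(x_{m+i},0)$ and $y=(x_i,1)$, which satisfy $f(x)=f(y)$ by (ii), yields $f(A^k(x_{m+i},0))=f(A^k(x_i,1))$ for all $0\le k\le p-1$. I would then evaluate both sides with the explicit orbit formula of Lemma \ref{lemma4} (equivalently the remark describing $A$). Since $A^k(x_{m+i},0)=(x_{m+i},k)$ and $A^k(x_i,1)=(x_i,k+1)$ for $k\le p-2$, the cases $0\le k\le p-2$ reproduce exactly the shift relations $f_k(x_{m+i})=f_{k+1}(x_i)$; and the case $k=p-1$, where $A^{p-1}(x_i,1)=A(x_i,p-1)=(x_i,0)\tr(x_d^{\epsilon_d},0)\tr\ldots\tr(x_1^{\epsilon_1},0)$, reproduces the wrap-around relation after using multiplicativity of $f$ with respect to $\tr$. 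Hence all relations of \eqref{pres2} are respected, and $f$ descends to a well-defined map $R(D)\to X$ preserving $\tr$.

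It then remains to check condition 2 of Definition \ref{def4}, namely that $f$ preserves the operator action. Here I would replace the periodicity identity $A^p=\mathrm{id}$ used in Proposition \ref{prop3} by Lemma \ref{lemma3}, which is designed for the non-affine regime $A^p=F\ne\mathrm{id}$. Concretely, given $f(z)=f(w)$ with $z,w\in S$, condition (iii) gives $f(A^k(z))=f(A^k(w))$ for $0\le k\le p-1$, and Lemma \ref{lemma3} extends this to all $k\in\mathbb{N}$; the extension to negative exponents (needed because the operator action includes $A^{-1}$) follows from the invertibility of $A$ and $F$ recorded in Lemma \ref{lemma2}, using that $F^{-1}$ is itself a $\tr$-expression which $f$ respects, so that one may cancel the extra factors by the first rack axiom in $X$. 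Since the forward implication $f(z)=f(w)\Rightarrow f(A^n(z))=f(A^n(w))$ then holds for all $n\in\ZZ$, the biconditional of Definition \ref{def4} follows, and $f$ is a rack homomorphism $R(D)\to X$.

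The converse is routine and parallels Proposition \ref{prop3}: a rack homomorphism $f$ respects all relations of \eqref{pres2}, whence reading off the crossing relations gives (i), the $k=0$ shift relations give (ii), and the fact that $f$ preserves the action of $A$ gives (iii). I expect the main obstacle to be bookkeeping rather than conceptual: one must carefully match the orbit computation of Lemma \ref{lemma4} against the shift and wrap-around relations so as to see that the single hypothesis (ii) generates all $p$ relations per strand, the delicate point being the case split at $k=p-1$. Once that is handled, the remainder follows the affine template of Proposition \ref{prop3}.
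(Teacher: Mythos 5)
Your proposal is correct and follows essentially the same route as the paper: extend $f$ to $FR(S)\to X$, use (i) for the crossing relations, use (ii) together with (iii) to propagate the single equality $f_{0}(x_{m+i})=f_{1}(x_{i})$ along the $A$-orbit (splitting off the $k=p-1$ wrap-around case), and then verify preservation of the $A$-action via $A^{p}=F$ from Lemma \ref{lemma2}. The only cosmetic difference is that you cite Lemma \ref{lemma3} where the paper inlines the identical computation, and you are slightly more explicit about negative powers of $A$.
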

\begin{proof} By the proposition \ref{prop2}, the augmented fundamental rack $R(D)$ has a presentation 
\begin{xalignat*}{1}
& [S:\, (x_{i},k)\tr (x_{j},k)=(x_{l},k)\textrm{ for every crossing relation $x_{i}\tr x_{j}=x_{l}$ and $k=0,\ldots ,p-1$},\\
& \, (x_{m+i},k)=(x_{i},k+1), \textrm{ for $i=1,\ldots ,d$ and $k=0,\ldots ,p-2$}, \\
& (x_{m+i},p-1)=(x_{i},0)\tr (x_{d}^{\epsilon _{d}},0)\tr \ldots \tr (x_{1}^{\epsilon _{1}},0)\textrm{ for $i=1,\ldots ,d$}]
\end{xalignat*}

Let $f\colon S\to X$ be a function that satisfies the conditions $(i)$-$(iii)$. The function $f$ defines a rack homomorphism from the free rack $FR(S)$ to $X$ by $$f\left ((x_{i},r)\tr (x_{j},s)\right ):=f(x_{i},r)\tr f(x_{j},s)$$ for any $(x_{i},r),(x_{j},s)\in S$. Since $f$ satisfies the condition $(i)$, it follows that $f(x_{i},k)\tr f(x_{j},k)=f_{k}(x_{i})\tr f_{k}(x_{j})=f_{k}(x_{l})=f(x_{l},k)$ for every crossing relation $x_{i}\tr x_{j}=x_{l}$ and $k=0,\ldots ,p-1$. Since $f$ satisfies the condition $(ii)$, we have $f(x_{m+i},0)=f_{0}(x_{m+i})=f_{1}(x_{i})=f(x_{i},1)$, and by the condition $(iii)$ it follows that $f(x_{m+i},k)=f(A^{k}(x_{m+i},0))=f(A^{k}(x_{i},1))=f(x_{i},k+1)$ for $k=0,\ldots ,p-2$ and 
\begin{xalignat*}{1}
& f(x_{m+i},p-1)=f(A^{p-1}(x_{m+i},0))=f(A^{p-1}(x_{i},1))=f((x_{i},0)\tr (x_{d}^{\epsilon _{d}},0)\tr \ldots \tr (x_{1}^{\epsilon _{1}},0))=\\
& =f(x_{i},0)\tr f(x_{d}^{\epsilon _{d}},0)\tr \ldots \tr f(x_{1}^{\epsilon _{1}},0)
\end{xalignat*} for $i=1,\ldots ,d$. Therefore $f$ preserves all the relations of the above presentation. To see that $f$ preserves the action of the rack automorphism $A$, let $f(x_{i},r)=f(x_{j},s)$ for some $(x_{i},r),(x_{j},s)\in S$. It follows from $(iii)$ that $f(A^{k}(x_{i},r))=f(A^{k}(x_{j},s))$ for $0\leq k\leq p-1$. By lemma \ref{lemma2} we have
\begin{xalignat*}{1}
& f(A^{k+p}(x_{i},r))=f(F(A^{k}(x_{i},r)))=f(A^{k}(x_{i},r)\tr (x_{d}^{\epsilon _{d}},0)\tr \ldots \tr (x_{1}^{\epsilon _{1}},0))=\\
& =f(A^{k}(x_{i},r))\tr f(x_{d}^{\epsilon _{d}},0)\tr \ldots \tr f(x_{1}^{\epsilon _{1}},0)=f(A^{k}(x_{j},s))\tr f(x_{d}^{\epsilon _{d}},0)\tr \ldots \tr f(x_{1}^{\epsilon _{1}},0)=\\
& =f(A^{k+p}(x_{j},s))
\end{xalignat*} for $k=0,\ldots ,p-1$. It follows by induction that $f(A^{n}(x_{i},r))=f(A^{n}(x_{j},s))$ for any $n\in \ZZ $, therefore $f$ preserves the action of $A$ and defines a rack homomorphism from $R(D)$ to $X$. 

Conversely, let $f\colon S\to X$ be a function that defines a rack homomorphism from $R(D)$ to $X$. Then $f$ preserves all the relations of the above presentation, which implies $(i)$ and $(ii)$, and  $f$ preserves the action of $A$, which implies $(iii)$.
\end{proof}

Let $L$ be a link in $L(p,1)$, given by the diagram on the Figure \ref{fig:diagram}. The surgery curve $U$ bounds an obvious disk $\mathcal{D}$ in $S^{3}$. Cutting the link $L$ along the disk $\mathcal{D}$, we obtain a tangle $t_{L}$ in $S^{3}$. The fundamental rack of a tangle is obtained in a similar way as the fundamental rack of a (classical) link, see \cite[page 210]{EN1}. The fundamental rack of the tangle $t_{L}$ is given by $$R(t_{L})=\left [x_{1},\ldots ,x_{m+d}|\, \textrm{crossing relations of $L$}\right ]\;.$$Observe that by adding identifications on the generators, corresponding to the open arcs of $t_{L}$, we obtain the fundamental rack of the classical link $L'$ in $S^{3}$ that corresponds to $L$: $$R(L')=\left [x_{1},\ldots ,x_{m+d}|\, \textrm{crossing relations of $L$},x_{i}=x_{m+i}\textrm{ for }i=1,\ldots ,d\right ]\;.$$

\begin{corollary}\label {cor2} Let $X$ be a finite rack. \begin{enumerate}
\item If $f\colon S\to X$ defines a coloring of $R(D)$, then $f_{k}\colon \{x_{1},\ldots ,x_{m+d}\}\to X$ is a coloring of $R(t_{L})$ for $k=0,\ldots ,p-1$. 
\item Suppose $L$ is an affine link in $L(p,1)$. If $f_{k}\colon \{x_{1},\ldots ,x_{m+d}\}\to X$ is a coloring of $R(t_{L})$ for $k=0,\ldots ,p-1$, such that the map $f=(f_{0},\ldots ,f_{p-1})$ satisfies the condition $(ii)$ of proposition \ref{prop3}, then $f$ is a coloring of $R(D)$. 
\item Suppose $L$ is a link in $L(p,1)$ which is not affine. If $f_{k}\colon \{x_{1},\ldots ,x_{m+d}\}\to X$ is a coloring of $R(t_{L})$ for $k=0,\ldots ,p-1$, such that the map $f=(f_{0},\ldots ,f_{p-1})$ satisfies the conditions $(ii)$ and $(iii)$ of proposition \ref{prop4}, then $f$ is a coloring of $R(D)$. 
\end{enumerate}
\end{corollary}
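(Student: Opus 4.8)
The plan is to observe that all three statements are essentially repackagings of Propositions \ref{prop3} and \ref{prop4}, obtained once we identify condition $(i)$ of those propositions with the statement that each slice $f_k$ colors the tangle rack $R(t_L)$. The whole argument is therefore a dictionary between two ways of phrasing the same crossing-relation constraints, together with an invocation of the already-proved equivalences.

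First I would record the meaning of a coloring of $R(t_L)$. Since $R(t_L)$ is the primary rack presented by the generators $x_1,\ldots,x_{m+d}$ together with only the crossing relations of $L$ (exactly as in the classical case of Example \ref{ex1}), a function $g\colon\{x_1,\ldots,x_{m+d}\}\to X$ defines a coloring of $R(t_L)$ if and only if $g(x_i)\tr g(x_j)=g(x_l)$ for every crossing relation $x_i\tr x_j=x_l$ of $L$. Applying this criterion to $g=f_k$ for each $k=0,\ldots,p-1$, I would conclude that the family $f_0,\ldots,f_{p-1}$ consists of colorings of $R(t_L)$ \emph{if and only if} condition $(i)$ of Proposition \ref{prop3} (equivalently of Proposition \ref{prop4}) holds. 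This is the one observation on which everything else rests, and it only requires checking that the crossing relations used to present $R(t_L)$ are literally the same ones appearing in the presentations of Proposition \ref{prop2}.

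With that equivalence in hand the three parts follow formally. For part 1 I would take a coloring $f\colon S\to X$ of $R(D)$ and apply Proposition \ref{prop3} if $L$ is affine, or Proposition \ref{prop4} otherwise, to deduce that $f$ satisfies condition $(i)$; by the equivalence above, each $f_k$ is then a coloring of $R(t_L)$. For parts 2 and 3 I would run the implication in the reverse direction: the hypothesis that every $f_k$ colors $R(t_L)$ supplies condition $(i)$, while the stated extra hypotheses supply the remaining conditions --- condition $(ii)$ of Proposition \ref{prop3} in the affine case, and conditions $(ii)$ and $(iii)$ of Proposition \ref{prop4} in the non-affine case. Feeding these into the relevant proposition yields that $f=(f_0,\ldots,f_{p-1})$ defines a rack homomorphism, i.e. a coloring of $R(D)$.

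I do not expect a genuine obstacle here, since no new rack-theoretic content is needed beyond Propositions \ref{prop3} and \ref{prop4}. The only points demanding care are bookkeeping ones: confirming that ``coloring of $R(t_L)$'' is interpreted as preservation of the crossing relations alone (the tangle presentation carries no operator relators, so the augmentation data of $R(D)$ enters only through conditions $(ii)$ and $(iii)$), and matching the index $k$ on $f_k$ with the slices of the presentation in Proposition \ref{prop2} so that condition $(i)$ lines up exactly with the tangle-coloring condition for each $k$.
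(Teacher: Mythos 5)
Your proposal is correct and follows exactly the paper's own argument: both identify "each $f_{k}$ colors $R(t_{L})$" with condition $(i)$ of Propositions \ref{prop3} and \ref{prop4} (since $R(t_{L})$ is presented by the crossing relations alone), and then invoke those propositions in the appropriate direction for each of the three parts. No meaningful difference in approach.
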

\begin{proof}\begin{enumerate}
\item If $f$ defines a coloring of $R(D)$, then the propositions \ref{prop3}(i) and \ref{prop4}(i) imply that $f_{k}$ preserves all the crossing relations of $L$ . It follows that $f_{k}$ defines a coloring of $R(t_{L})$ for $k=0,\ldots ,p-1$.  
\item If $f_{k}\colon \{x_{1},\ldots ,x_{m+d}\}\to X$ is a coloring of $R(t_{L})$ for $k=0,\ldots ,p-1$, then $f_{k}$ preserves all the crossing relations of $L$ . The map $f=(f_{0},\ldots ,f_{p-1})$ therefore satisfies the condition $(i)$ of the proposition \ref{prop3}. If it also satisfies the condition $(ii)$, then it defines a coloring of $R(D)$.  
\item If $f_{k}\colon \{x_{1},\ldots ,x_{m+d}\}\to X$ is a coloring of $R(t_{L})$ for $k=0,\ldots ,p-1$, then $f_{k}$ preserves all the crossing relations of $L$ . The map $f=(f_{0},\ldots ,f_{p-1})$ therefore satisfies the condition $(i)$ of the proposition \ref{prop3}. If it also satisfies the conditions $(ii)$ and $(iii)$, then it defines a coloring of $R(D)$.  
\end{enumerate} 
\end{proof}

Sam Nelson wrote a \verb|Phyton| code which computs all the rack homomorphisms from a fundamental rack of a classical link $L$ to a finite rack $X$. The input of his function \textit{pdhomlist}($PD,M_{X}$) are the planar diagram $PD$ of the link $L$ and the rack matrix $M_{X}$ of the rack $X$. The output are all the colorings of the diagram $PD$ with colors from $X$ which satisfy the crossing relations. Since a representation of the fundamental rack of a (classical) tangle is obtained from its planar diagram in the same way as the representation of the fundamental rack of a classical link, the function also works for tangles. 

In our case, $L$ is a link in the lens space $L(p,1)$ and by the corollary \ref{cor2}, every coloring of $R(D)$ defines a $p$-tuple of colorings of $R(t_{L})$. We may use the function \textit{pdhomlist}($PD,M_{X}$) to find the set of all possible colorings of $R(t_{L})$. Any $p$-tuple from this set will satisfy the condition $(i)$ from the Propositions \ref{prop3} and \ref{prop4}. We must then choose the $p$-tuple $(f_{0},\ldots ,f_{p-1})$ in such a way that it will also satisfy the conditions $(ii)$ and $(iii)$ to define a rack homomorphism from $R(D)\to X$.  
\end{subsection}

\end{section}

\begin{section}{The counting rack invariants of links in $L(p,1)$}\label{sec3}
In this section we recall the counting rack invariants of classical links, defined by Sam Nelson in \cite{SN1}. We show that those invariants may be generalized and applied to the links in the lens spaces $L(p,1)$. 
 
Recall the following result about the diagram equivalence for links in $L(p,q)$:
\begin{Theorem}[\cite{HP},\cite{gma}] \label{th1}Two links $L_{1}$ and $L_{2}$ in $L(p,q)$ with diagrams $D_{1}$ and $D_{2}$ are isotopic if and only if there exists a finite sequence of moves $\Omega _{1}$, $\Omega _{2}$, $\Omega _{3}$ and $SL_{p,q}$ that transform one diagram to the other. 
\end{Theorem}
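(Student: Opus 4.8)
The statement is a Reidemeister-type theorem for links in $L(p,q)$, so the plan is to fix a concrete diagrammatic model and then argue the two implications separately. Following the surgery description used throughout this section, I would represent $L(p,q)$ as the result of surgery on a framed unknot (or surgery link) $U\subset S^{3}$ and encode a link $L\subset L(p,q)$ by the mixed diagram $D$ of $U\cup L$ in $S^{3}$, exactly as in Figure \ref{fig:diagram}. In this model the moves $\Omega_{1},\Omega_{2},\Omega_{3}$ are the three classical Reidemeister moves performed among the arcs of $L$, while $SL_{p,q}$ is the slide move that pushes an arc of $L$ across the surgery disk bounded by $U$; this last move is where the lens space data $(p,q)$ enters.

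The ``if'' direction is the routine one: I would check that each listed move is realized by an ambient isotopy of $L(p,q)$. The moves $\Omega_{1},\Omega_{2},\Omega_{3}$ are supported in a ball disjoint from $U$ and hence are honest Reidemeister moves, which are ambient isotopies in every $3$-manifold. For $SL_{p,q}$ I would exhibit the corresponding isotopy explicitly: the slide pushes a strand through a neighborhood of the surgery disk, i.e. across the handle that is glued in to form $L(p,q)$; since the core of that handle is not part of the link, the slide is an isotopy of $L$ in the surgered manifold. This already shows that diagrams related by the moves represent isotopic links.

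The ``only if'' direction carries the weight of the theorem. Given an ambient isotopy $H\colon L\times[0,1]\to L(p,q)$ carrying $L_{1}$ to $L_{2}$, I would first perturb $H$ so that it is transverse to the surgery region and then put the resulting one-parameter family of projections into general position. By a standard general-position (Cerf-theoretic) argument, the family of diagrams $D_{t}$ then changes only at finitely many parameter values, at each of which a single codimension-one event occurs. Away from the surgery disk these events are precisely $\Omega_{1},\Omega_{2},\Omega_{3}$ together with planar isotopy, by the usual classification of singularities of a generic projection of an arc family in a ball. The remaining events are those in which a strand of $L$ crosses the surgery disk, and I would show that each such passage normalizes to the single move $SL_{p,q}$.

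The hard part is this last classification: verifying that crossing the surgery region never produces anything beyond $SL_{p,q}$, and that the specific pair $(p,q)$ is correctly recorded by the slide. The delicate points are (i) controlling how a strand wraps around the nontrivial generator of $\pi_{1}(L(p,q))\cong\ZZ/p$ as it slides, so that one passage of the disk corresponds to exactly one instance of $SL_{p,q}$, and (ii) checking that the finitely many slide events can be commuted past the Reidemeister events, so that the whole isotopy factors as a finite word in $\Omega_{1},\Omega_{2},\Omega_{3},SL_{p,q}$. Both points require a careful local analysis of the surgery handle and are exactly the content established in \cite{HP} and \cite{gma}; rather than re-derive the general-position machinery from scratch, I would invoke their local models and simply record that the event list is complete.
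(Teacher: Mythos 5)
This theorem is not proved in the paper at all: it is imported verbatim from \cite{HP} and \cite{gma}, so there is no in-paper argument to compare yours against. Your outline is the standard strategy that those references actually carry out (fix the mixed-diagram model $U\cup L$ in $S^{3}$, check each move is an isotopy for the ``if'' direction, and run a general-position/Cerf argument on a generic one-parameter family of projections for the ``only if'' direction), so in spirit you are reconstructing the right proof. However, as written your text is a proof plan rather than a proof: the two steps you yourself identify as carrying all the weight --- the completeness of the list of codimension-one events at the surgery disk, and the normalization of each disk passage to a single $SL_{p,q}$ together with commuting slides past Reidemeister events --- are exactly the content of the theorem, and you explicitly defer them to the very references the theorem is quoted from. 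That is circular as a proof attempt; it establishes nothing beyond the citation.

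There is also one concrete imprecision worth fixing. You describe $\Omega_{1},\Omega_{2},\Omega_{3}$ as Reidemeister moves ``performed among the arcs of $L$'' and supported in a ball disjoint from $U$. In the mixed-diagram calculus the Reidemeister moves must also include the mixed versions in which an arc of $L$ passes over or under the projection of the fixed surgery curve $U$ (an $\Omega_{2}$ or $\Omega_{3}$ move with one strand belonging to $U$); these change the crossing data with $U$ but do not involve the surgery disk. Without them the ``only if'' direction already fails for isotopies supported entirely in the complement of the handle, since a generic such isotopy will drag strands of $L$ across the projection of $U$. Your general-position argument would therefore need to classify these mixed events as well, not only the pure-$L$ Reidemeister events and the disk passages.
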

\begin{figure}[h]
\labellist
\normalsize \hair 3pt
\pinlabel $\Omega _{1}$ at 130 -20
\pinlabel $\Omega _{2}$ at 850 -20
\pinlabel $\Omega _{3}$ at 1770 -20
\endlabellist
\begin{center}
\includegraphics[scale=0.2]{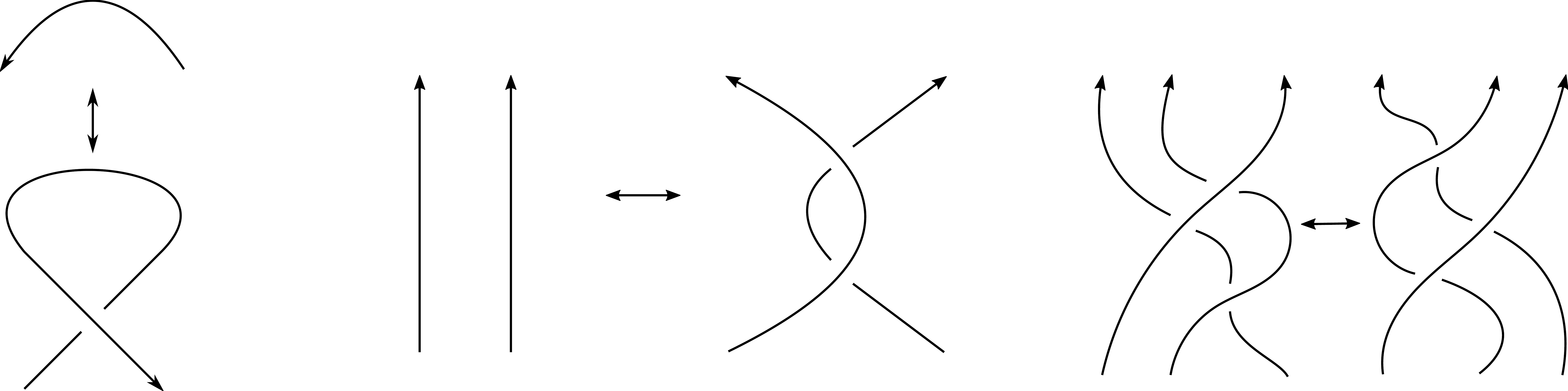}
\caption{The moves $\Omega _{1}$, $\Omega _{2}$ and $\Omega _{3}$}
\label{fig:Reid}
\end{center}
\end{figure}
\begin{figure}[h]
\labellist
\normalsize \hair 3pt
\pinlabel $5/2$ at 30 0
\pinlabel $5/2$ at 680 0
\endlabellist
\begin{center}
\includegraphics[scale=0.4]{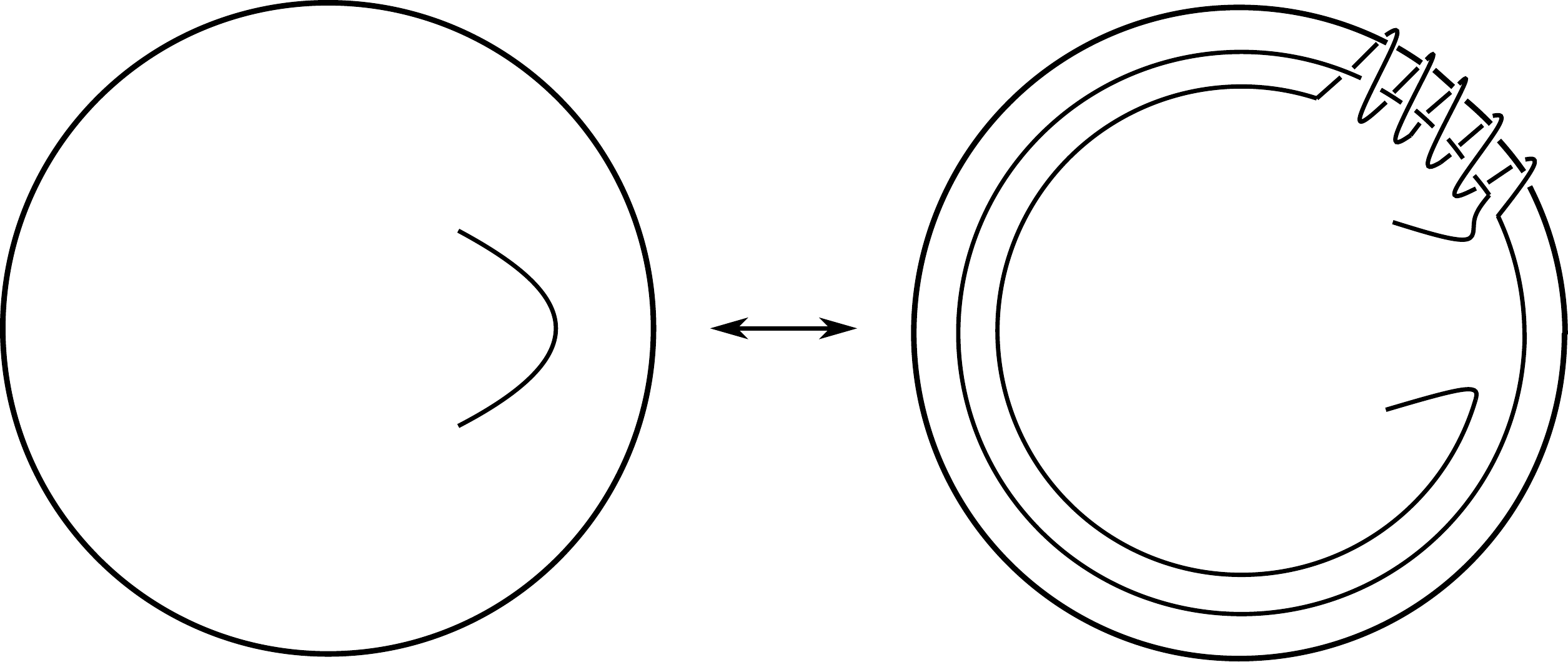}
\caption{The slide move $SL_{5,2}$}
\label{fig:SLpq}
\end{center}
\end{figure}

Let $L$ be an $n$-component  link in $L(p,1)$, and let $X$ be a finite rack with rack rank $N$. Each framing of $L$ is given by a writhe vector $\mathbf{w}\in \ZZ ^{n}$. Denote the blackboard-framed diagram of $L$ with writhe vector $\mathbf{w}\in \mathbb{Z}^{n}$ by $(D,\mathbf{w})$. For each framing $\mathbf{w}$, the diagram $(D,\mathbf{w})$ of the framed link defines its own fundamental rack $R(D,\mathbf{w})$ with its own homomorphism set $Hom(R(D,\mathbf{w}),X)$. Luckily, since $X$ is a finite rack, there are only finitely many framings which produce different homomorphism sets, by the following results of \cite{SN1}:   
\begin{definition}\cite[Definition 4]{SN1}\label{def10} Let $N\in \mathbb{N}$. Two blackboard-framed oriented link diagrams are \textbf{$N$-phone cord equivalent} if one may be obtained from the other by a finite sequence of $\Omega _{2}$ and $\Omega _{3}$ moves and the \textbf{$N$-phone cord move} in the Figure \ref{fig:phone}, where $N$ is the number of loops. 
\end{definition}
\begin{figure}[h]
\labellist
\normalsize \hair 2pt
\pinlabel $y$ at 1350 130
\pinlabel $y^{\tr 1}$ at 1150 120
\pinlabel $y^{\tr 2}$ at 900 120
\pinlabel $y^{\tr (N-1)}$ at 300 120
\pinlabel $y$ at 90 130
\pinlabel $y$ at 90 -40
\endlabellist
\begin{center}
\includegraphics[scale=0.2]{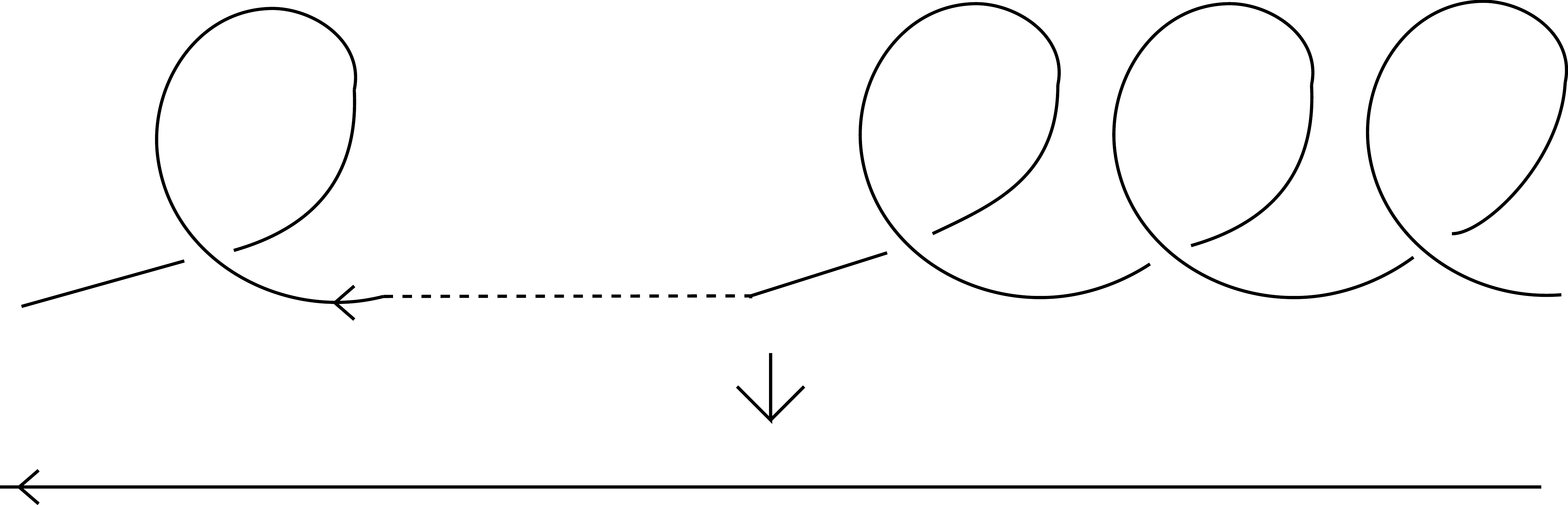}
\caption{$N$-phone cord move}
\label{fig:phone}
\end{center}
\end{figure}
\begin{proposition}\label{prop5} \cite[Proposition 4]{SN1} Let $X$ be a finite rack with rack rank $N$. If two link diagrams $D$ and $D'$ are $N$-phone cord isotopic, then $|Hom(R(D),X)|=|Hom(R(D'),X)|$. 
\end{proposition}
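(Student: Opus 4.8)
The plan is to identify $Hom(R(D),X)$ with the set of \emph{colorings} of $D$, i.e.\ the functions that assign to each arc of $D$ an element of $X$ and respect the crossing relation at every crossing, and then to show that the number of colorings is unchanged by each of the three moves generating $N$-phone cord equivalence. Because the fundamental rack of a classical framed link has the primary presentation with one generator per arc and one relation per crossing (Example \ref{ex1}), a rack homomorphism $f\colon R(D)\to X$ is exactly such a coloring; hence $|Hom(R(D),X)|$ equals the number of colorings of $D$, and it suffices to construct, for each move, a bijection between the coloring sets of the two diagrams.

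First I would treat the regular-isotopy moves $\Omega_2$ and $\Omega_3$. Each is supported in a disk, and a coloring of either diagram restricts to the same coloring on the part outside the disk, so the task reduces to checking that every coloring of the boundary arcs extends uniquely across the disk in both diagrams. For $\Omega_2$ this is the statement that $f_{y}$ and its inverse $f_{y}^{-1}$ are bijections (the first rack axiom), and for $\Omega_3$ it is precisely the second rack axiom $(x\tr y)\tr z=(x\tr z)\tr(y\tr z)$; matching the two extensions produces the bijection on colorings. Since neither move alters the writhe, the fundamental racks are in fact isomorphic, which is the conceptual reason behind the bijection.

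The decisive step is the $N$-phone cord move of Figure \ref{fig:phone}. Here I would show that a coloring of the phone-cord tangle is completely determined by the single color $y\in X$ entering at the top: reading through the successive loops, the crossing relations force the colors $y=y^{\tr 0},\,y^{\tr 1},\,y^{\tr 2},\dots,y^{\tr(N-1)}$, where $x^{\tr k}$ is the iterate of Definition \ref{def2}. Closing the cord requires the color $y^{\tr N}$ emerging at the far end to agree with $y$. By the remark following Definition \ref{def2}, the map $x\mapsto x^{\tr k}$ is the $k$-th power of the diagonal permutation $\pi(x)=x\tr x$, and the rack rank $N=N(X)$ is exactly the order of $\pi$; hence $y^{\tr N}=\pi^{N}(y)=y$ for \emph{every} $y\in X$. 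The consistency condition is therefore automatic, and the two assignments (send a coloring of the phone cord to the color $y$ of the arc that replaces it, and extend a color $y$ on the straight arc to the loops by $y^{\tr k}$) are mutually inverse. This yields the bijection before and after the move, and combining the three cases gives $|Hom(R(D),X)|=|Hom(R(D'),X)|$.

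I expect the phone-cord step to be the main obstacle, since it is where the hypothesis on the rack rank enters essentially and where one must track crossing signs and orientations carefully to confirm that traversing a single loop really applies $x\mapsto x\tr x$, so that $N$ loops apply $\pi^{N}$; the $\Omega_2$ and $\Omega_3$ cases, by contrast, are routine consequences of the two rack axioms.
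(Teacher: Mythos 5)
Your argument is correct and follows essentially the same route as the paper: the decisive point --- that the colors forced around the successive loops of the phone cord are $y^{\tr 1},\dots ,y^{\tr (N-1)}$ and that the closure condition $y^{\tr N}=y$ is automatic because $N$ is the order of the diagonal permutation $\pi (x)=x\tr x$ --- is exactly the paper's argument. The only differences are of setting and emphasis: since $D$ here is a diagram of a link in $L(p,1)$, the paper describes a homomorphism as a $p$-tuple of colorings $(f_{0},\dots ,f_{p-1})$ and applies your extension coordinatewise via $f_{i}(y^{\tr k})=f_{i}(y)^{\tr k}$, whereas you phrase everything for a single classical coloring (the adaptation is immediate), and the paper leaves the $\Omega _{2}$ and $\Omega _{3}$ cases implicit in this proof, spelling them out only later in the proof of Proposition \ref{prop6}.
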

\begin{proof} Suppose the $D'$ is obtained from $D$ by an $N$-phone cord move as shown on the Figure. Every rack homomorphism $f\colon R(D)\to X$ is given by the $p$-tuples $(f_{0}(x),\ldots ,f_{p-1}(x))\in X^{p}$ for every arc $x$ of the diagram $D$. The same coloring also defines a rack homomorphism from $R(D')\to X$: the colors of the additional arcs are given by $f_{i}(y^{\tr k})=f_{i}(y)^{\tr k}$ for $k=1,\ldots ,N-1$ and $f_{i}(y^{\tr N})=f_{i}(y)^{\tr N}=f_{i}(y)$ for $i=0\ldots ,p-1$. 
\end{proof}
It follows from the Proposition \ref{prop5} that two writhe vectors $\mathbf{u},\mathbf{v}\in \mathbb{Z}^{n}$ define the same counting rack invariant if $\mathbf{u}\equiv \mathbf{v}\mod N$ (meaning that $u_{i}\equiv v_{i}\mod N$ for $i=1,\ldots ,n$). The set of writhes of $L$ modulo $N$ can be indexed by the set $(\mathbb{Z}_{N})^{n}$. Let $(D,\mathbf{w})$ denote the blackboard-framed diagram of $L$ with writhe vector $\mathbf{w}\in (\mathbb{Z}_{N})^{n}$.
\begin{definition}\cite[Definition 5]{SN1} \label{def11} Let $L$ be an $n$-component link in $L(p,1)$ and let $X$ be a finite rack with rack rank $N$. The \textbf{integral rack counting invariant} of $L$ with respect to $X$ is $$\Phi _{X}^{\mathbb{Z}}(L)=\sum _{\mathbf{w}\in (\mathbb{Z}_{N})^{n}}|Hom(R(D,\mathbf{w}),X)|\;.$$
\end{definition}
The integer rack counting invariant is an integer, counting the cardinality of the set of rack homomorphisms. By decorating each rack homomorphism by the writhe vector of its associated framing, we may enhance this invariant and replace the integer by a polynomial. For a writhe vector $\mathbf{w}=(w_{1},\ldots ,w_{n})\in (\mathbb{Z}_{N})^{n}$, denote the product $\prod _{k=1}^{n}q_{1}^{w_{1}}q_{2}^{w_{2}}\ldots q_{n}^{w_{n}}$ by the formal variable  $q^{\mathbf{w}}$. Then we may define
\begin{definition}\cite[Definition 6]{SN1} \label{def12} Let $L$ be an $n$-component link in $L(p,1)$ with the writhe vector $\mathbf{w}\in (\mathbb{Z}_{N})^{n}=W$. Let $X$ be a finite rack with rack rank $N$. The \textbf{writhe-enhanced rack counting invariant} of $L$ with respect to $X$ is $$\Phi _{X}^{W}(L)=\sum _{\mathbf{w}\in (\mathbb{Z}_{N})^{n}}|Hom(R(D,\mathbf{w}),X)|q^{\mathbf{w}}\;.$$
\end{definition}
\begin{proposition} \label{prop6} $\Phi _{X}^{\ZZ }$ and $\Phi _{X}^{W}$ are invariants of links in $L(p,1)$ for any finite rack $X$. 
\end{proposition}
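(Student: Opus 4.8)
The plan is to reduce the statement to a finite check against the generating moves for diagram equivalence. By Theorem \ref{th1}, two diagrams in $L(p,1)$ represent isotopic links precisely when they are joined by a finite sequence of the moves $\Omega_1$, $\Omega_2$, $\Omega_3$ and $SL_{p,1}$, so it suffices to prove that both $\Phi_X^{\ZZ}$ and $\Phi_X^{W}$ are unchanged by each of these four moves. The organising observation I would isolate first is that, for a fixed framing $\mathbf{w}\in(\ZZ_N)^n$, the coefficient $|Hom(R(D,\mathbf{w}),X)|$ depends only on the framed-isotopy class of the framed link $(L,\mathbf{w})$ and not on the chosen diagram: by Definition \ref{def8} the augmented fundamental rack is built intrinsically from the framed link and the ambient $L(p,1)$, while Proposition \ref{prop1} (equivalently Proposition \ref{prop2}) merely extracts a presentation of it from $D$. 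Thus I only need each move to preserve, summand by summand, the isomorphism type of $R(D,\mathbf{w})$ together with the monomial $q^{\mathbf{w}}$.

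For the Reidemeister-type moves this is quick. The moves $\Omega_2$ and $\Omega_3$ are framed moves: they leave both the underlying link and the writhe vector untouched, and at the level of presentations they act by Tietze transformations on the crossing relations of Proposition \ref{prop2}, so $R(D,\mathbf{w})\cong R(D',\mathbf{w})$ for every $\mathbf{w}$ and every monomial $q^{\mathbf{w}}$ is carried to itself. The move $\Omega_1$ changes the blackboard writhe of one component by $\pm1$ but not the underlying unframed link. Since the writhe is prescribed independently in $(D,\mathbf{w})$ and the framing of each component is recorded by its writhe, the diagrams $(D,\mathbf{w})$ and $(D',\mathbf{w})$ present the same framed link $(L,\mathbf{w})$ for each fixed $\mathbf{w}$; hence the corresponding coefficients agree and the two sums, indexed over the fixed set $(\ZZ_N)^n$, are unchanged. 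The reduction of the a priori infinite range of framings to $(\ZZ_N)^n$ has already been secured by Proposition \ref{prop5}.

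The hard part will be the lens-space slide move $SL_{p,1}$, which is the only move not inherited from the classical Reidemeister calculus and which interacts with the surgery curve $U$, and hence with the operator generator $a$ and the operator relation $a^{p}\equiv x_d^{\epsilon_d}\cdots x_1^{\epsilon_1}$ of Proposition \ref{prop1}. Here I would avoid a direct manipulation of the presentation and instead argue invariantly: $SL_{p,1}$ realises an isotopy of $L$ inside $L(p,1)$ that fixes the writhe vector, so the framed-isotopy class of $(L,\mathbf{w})$ is unchanged, and therefore the augmented fundamental rack $R(D,\mathbf{w})$ is preserved up to isomorphism by the invariance of the fundamental rack established in \cite{FR}. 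Consequently every coefficient $|Hom(R(D,\mathbf{w}),X)|$ and every monomial $q^{\mathbf{w}}$ is fixed, and both invariants survive the move. Should a purely diagrammatic verification be preferred, the obstacle concentrates precisely here: one would have to trace how the slide alters the word $x_d^{\epsilon_d}\cdots x_1^{\epsilon_1}$ in the operator relation and check that the resulting presentation is Tietze-equivalent to the original, which is exactly the computation the intrinsic argument lets us bypass. Assembling the four cases, invariance under every move of Theorem \ref{th1} shows that $\Phi_X^{\ZZ}$ and $\Phi_X^{W}$ are invariants of links in $L(p,1)$.
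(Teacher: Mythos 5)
Your overall architecture --- reduce via Theorem \ref{th1} to checking the four moves, absorb $\Omega _{1}$ into the sum over framings secured by Proposition \ref{prop5}, and treat $\Omega _{2}$, $\Omega _{3}$ as Tietze moves on the presentation --- coincides with the paper's proof. The divergence is exactly where you predicted the difficulty: for $SL_{p,1}$ the paper carries out the diagrammatic computation you chose to bypass. It writes down the full general presentation of $R(D_{2})$ read off the slid diagram (new primary generators $y,z_{i},w_{i}$, new operator generators $a_{i}$, and the operator relation $(ya)^{p}\equiv x_{d}^{\epsilon _{d}}\cdots x_{1}^{\epsilon _{1}}$), eliminates the superfluous generators, deduces $x_{d}=y$ from the relations, and checks that what remains is equivalent to the presentation of $R(D_{1})$. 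Your alternative --- invoking the intrinsic Definition \ref{def8} and the invariance of the augmented fundamental rack under isotopy of framed links from \cite{FR} --- is legitimate in principle, and if applied consistently it would dispose of $\Omega _{2}$ and $\Omega _{3}$ as well; it buys brevity at the price of resting entirely on the (nontrivial, cited) fact that the presentation of Proposition \ref{prop1} really does present the intrinsic object. What it does not buy you is the writhe bookkeeping: your assertion that $SL_{p,1}$ ``fixes the writhe vector'' is unjustified and at the level of diagrams is generally false, since sliding a strand over the $p$-framed curve $U$ inserts a parallel copy of $U$ carrying its $p$ blackboard self-crossings, so the blackboard writhe of the slid component changes even though the framed link in $L(p,1)$ does not. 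For $\Phi _{X}^{\ZZ }$ this is harmless because the sum ranges over all residues in $(\mathbb{Z}_{N})^{n}$, but for $\Phi _{X}^{W}$ the monomial $q^{\mathbf{w}}$ is read off the diagram, so you still owe an argument that the decoration is well defined under the slide; this is the one genuine loose end in your shortcut, and it is the point that the paper's explicit presentation computation at least localizes, even if it does not dwell on it either.
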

\begin{proof} We need to show that 
\begin{xalignat}{1}\label{eq7}\Phi _{X}^{\ZZ }(L_{1})=\Phi _{X}^{\ZZ }(L_{2})\textrm{  and  }\Phi _{X}^{W}(L_{1})=\Phi _{X}^{W}(L_{2})
\end{xalignat} for any two isotopic links $L_{1},L_{2}$ in $L(p,1)$. Let $D_{i}$ be a diagram of $L_{i}$ for $i=1,2$. By the Theorem \ref{th1}, it is enough to check the equalities \eqref{eq7} in the case when $D_{1}$ and $D_{2}$ differ by any one of the moves $\Omega _{1},\Omega _{2},\Omega _{3}$ and $SL_{p,1}$. 

First observe that the functions $\Phi _{X}^{\ZZ }$ and $\Phi _{X}^{W}$ are defined as a sum over all possible framings $\mathbf{w}$ which produce different values of $|Hom(R(D,\mathbf{w}),X)|$ (see proposition \ref{prop5}) and thus are invariant of a given framing of the diagram $D$. We may deduce that if $D_{1}$ and $D_{2}$ differ by an $\Omega _{1}$ move, then the equalities \eqref{eq7} hold.
\begin{figure}[h]
\labellist
\normalsize \hair 2pt
\pinlabel $x$ at -10 60
\pinlabel $y$ at 110 60
\pinlabel $z=y\tr x$ at 430 160
\pinlabel $y$ at 680 300
\pinlabel $x$ at 460 300
\endlabellist
\begin{center}
\includegraphics[scale=0.3]{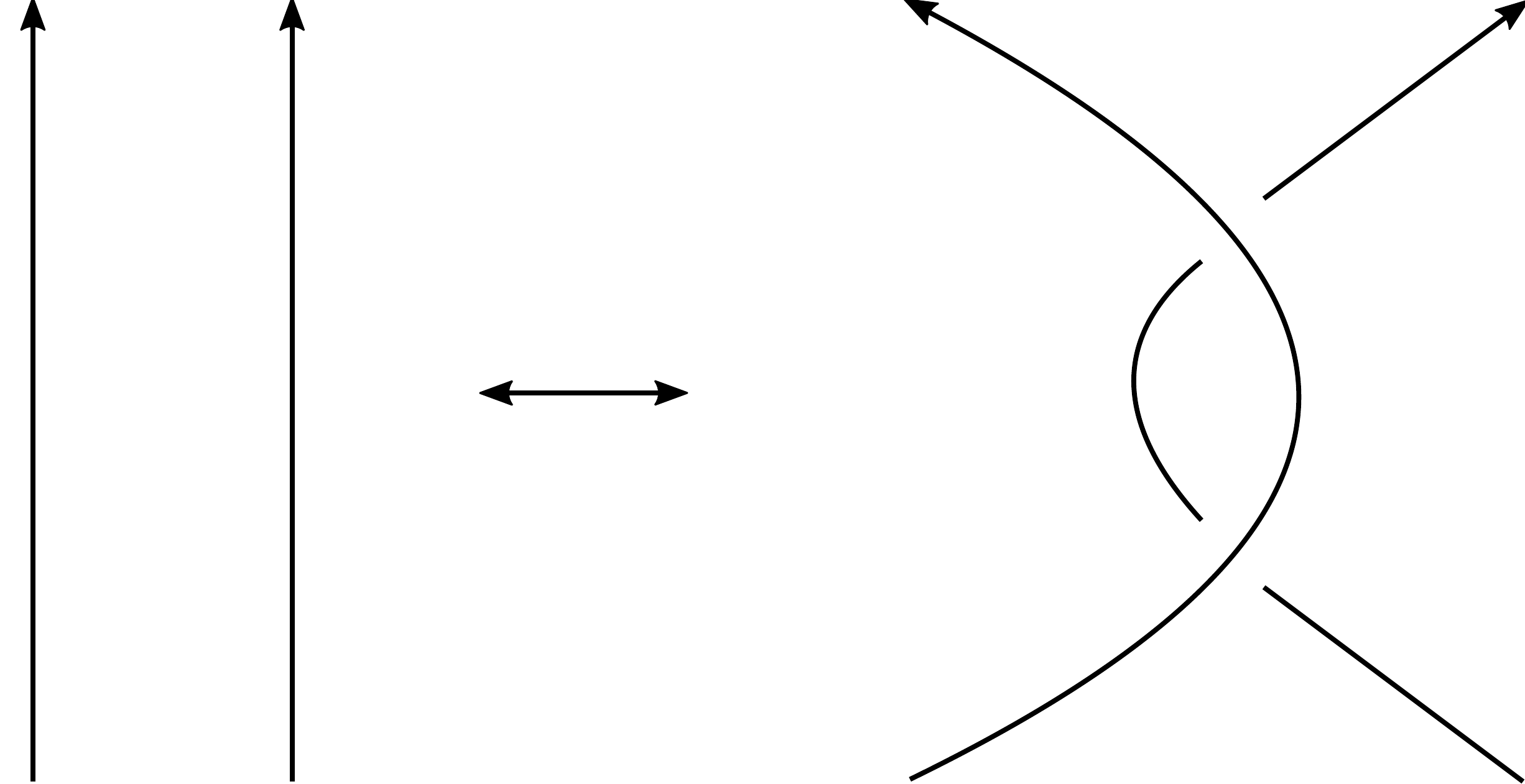}
\end{center}
\end{figure}

Secondly, suppose that the diagram $D_{2}$ is obtained from $D_{1}$ by applying an $\Omega _{2}$ move during which the arc $x$ overcrosses the arc $y$. Then the presentation of $R(D_{1})$ contains the generators $x$ and $y$, and the presentation of $R(D_{2})$ is obtained from the presentation of $R(D_{1})$ by adding a new generator $z$ and a new relation $z=y\tr x$. Since the new generator is written as a product of the previous generators, both presentations present the same rack: $R(D_{1})=R(D_{2})$ and as $\Phi _{X}^{\ZZ }(L_{i})$ and $\Phi _{X}^{W}(L_{i})$ are both functions of the fundamental rack $R(D_{i})$, the equalities \eqref{eq7} follow. 
\begin{figure}[h]
\labellist
\normalsize \hair 2pt
\pinlabel $z$ at -10 20
\pinlabel $y$ at 80 20
\pinlabel $x$ at 190 20
\pinlabel $x\tr y$ at 130 140
\pinlabel $(x\tr y)\tr z$ at 0 420
\pinlabel $y\tr z$ at 140 330
\pinlabel $z$ at 360 20
\pinlabel $y$ at 490 20
\pinlabel $x$ at 590 20
\pinlabel $y\tr z$ at 420 190
\pinlabel $(x\tr z)\tr (y\tr z)$ at 400 420
\pinlabel $x\tr z$ at 500 260
\endlabellist
\begin{center}
\includegraphics[scale=0.3]{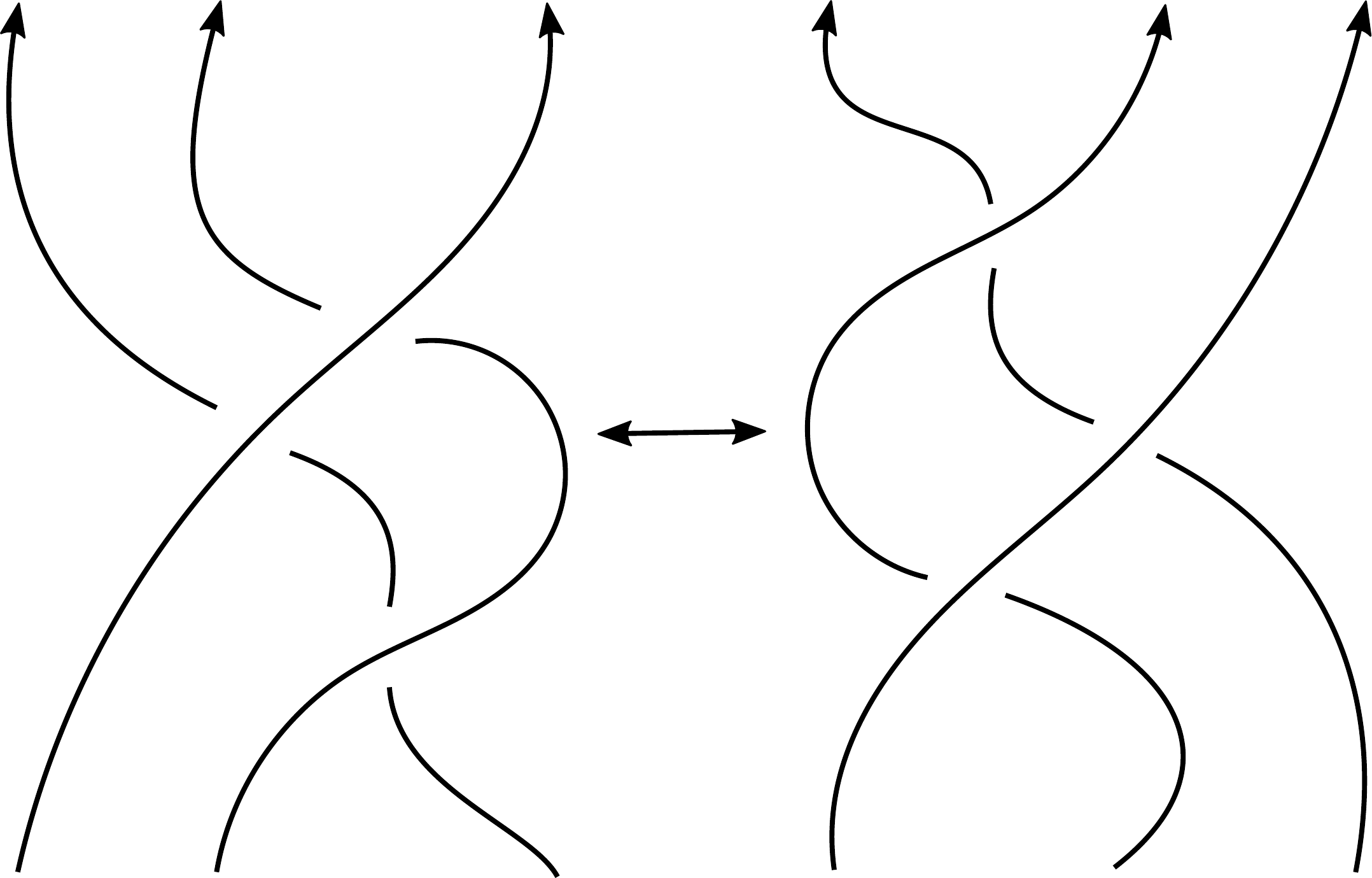}
\end{center}
\end{figure}

Now suppose that $D_{2}$ is obtained from $D_{1}$ by applying an $\Omega _{3}$ move. This means there are three arcs $x,y,z$ representing generators of $R(D_{1})$, whose intercrossings change so that a generator $(x\tr y)\tr z$ in $R(D_{1})$ becomes the generator $(x\tr z)\tr (y\tr z)$ in $R(D_{2})$. By the second rack axiom we have $(x\tr y)\tr z=(x\tr z)\tr (y\tr z)\in R(D_{i})$, therefore $R(D_{1})=R(D_{2})$ and again the equalities \eqref{eq7} follow.   

\begin{figure}[h]
\labellist
\normalsize \hair 2pt
\pinlabel $a^{(ya)^{p-1}}$ at 740 180
\pinlabel $a^{ya}$ at 900 200
\pinlabel $y^{ya}$ at 900 250
\pinlabel $a^{(ya)^{p}}$ at 610 180
\pinlabel $y^{(ya)^{p}}$ at 600 240
\pinlabel $a$ at 780 520
\pinlabel $y$ at 780 465
\pinlabel $a_{1}$ [b] at 320 385
\pinlabel $a_{d-2}$ [b] at 445 425
\pinlabel $a_{d-1}$ [b] at 490 435
\pinlabel $z_{1}$ [b] at 320 330
\pinlabel $z_{d-2}$ [b] at 445 330
\pinlabel $z_{d-1}$ [b] at 490 330
\pinlabel $w_{1}$ at 310 240
\pinlabel $w_{d-2}$ at 445 230
\pinlabel $w_{d}$ at 525 215
\pinlabel $x_{d}$ [l] at 340 530
\pinlabel $x_{1}$ [l] at 220 430
\pinlabel $x_{d-2}$ [l] at 340 475
\pinlabel $x_{d-1}$ [l] at 340 505
\pinlabel $x_{m+1}$ [b] at 325 140
\pinlabel $x_{m+d-2}$ [l] at 340 70
\pinlabel $x_{m+d-1}$ [l] at 340 40
\pinlabel $x_{m+d}$ [l] at 340 10
\pinlabel $L$ at 80 260
\pinlabel $U$ at 260 210
\endlabellist
\begin{center}
\includegraphics[scale=0.50]{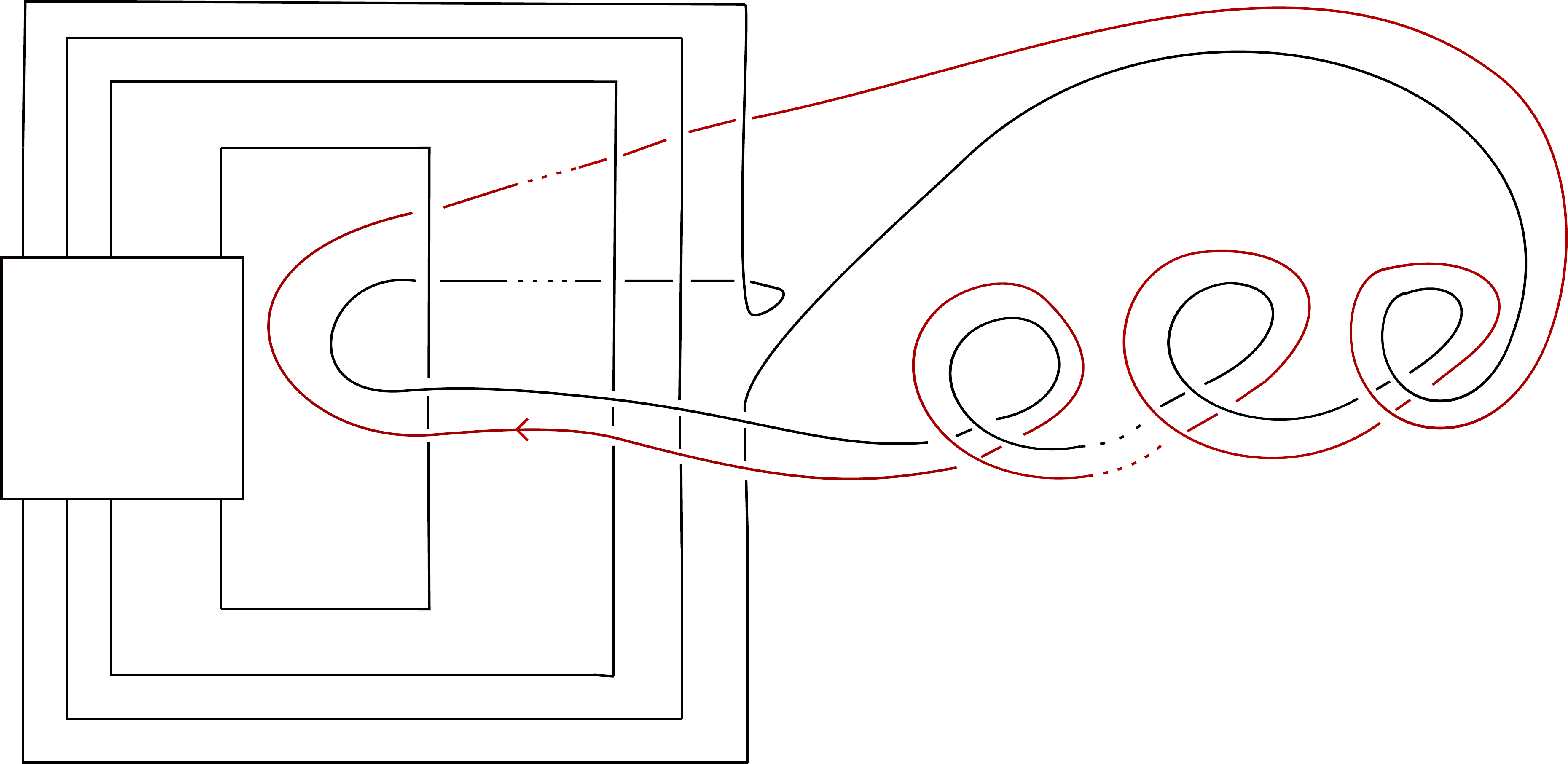}
\caption{The diagram $D_{2}$, obtained by the move $SL_{p,1}$ on a link in $L(p,1)$}
\label{fig:ndiagram}
\end{center}
\end{figure}
Finally, suppose that the diagram $D_{2}$ is obtained from $D_{1}$ by applying an $SL_{p,1}$ move. Let $D_{1}$ be labeled as shown on the Figure \ref{fig:diagram}. Applying the move $SL_{p,1}$, a strand of the link $L$ winds once along the blackboard framing of the surgery curve $U$. In doing so, the presentation 
\begin{xalignat*}{1}
& R(D_{1})= [\{x_{1},\ldots ,x_{m+d}\},\{a\}:\{\textrm{crossing relations of $L$},x_{i}^{a}=x_{m+i}\textrm{ for $i=1,\ldots ,d$}\},\\
& \{ a^{p}\equiv x_{d}^{\epsilon _{d}}x_{d-1}^{\epsilon _{d-1}}\ldots x_{1}^{\epsilon _{1}}\}]
\end{xalignat*} 
of the fundamental rack $R(D_{1})$ changes to a new presentation
\begin{xalignat*}{1}
& R(D_{2})=[\{x_{1},\ldots ,x_{m+d},y,z_{1},\ldots ,z_{d-1},w_{1},\ldots ,w_{d}\},\{a,a_{1},\ldots ,a_{d-1}\}:\{\textrm{crossing relations of $L$},\\ 
& x_{i}^{y^{(ya)^{p}}}=w_{i}\textrm{ for }i=1,\ldots ,d-1,\, y^{y^{(ya)^{p}}}=w_{d},\, w_{i}^{a^{(ya)^{p}}}=x_{m+i}\textrm{ for }i=1,\ldots ,d,\\
& z_{1}^{x_{1}^{\epsilon _{1}}}=y^{(ya)^{p}}, z_{2}^{x_{2}^{\epsilon _{2}}}=z_{1},\ldots ,z_{d-1}^{x_{d-1}^{\epsilon _{d-1}}}=z_{d-2},x_{d}^{x_{d}^{\epsilon _{d}}}=z_{d-1}\},\\
& \{\overline{x}_{1}^{\epsilon _{1}}a_{1}x_{1}^{\epsilon _{1}}\equiv a^{(ya)^{p}},\overline{x}_{2}^{\epsilon _{2}}a_{2}x_{2}^{\epsilon _{2}}\equiv a_{1},\ldots ,\overline{x}_{d-1}^{\epsilon _{d-1}}a_{d-1}x_{d-1}^{\epsilon _{d-1}}\equiv a_{d-2},\overline{x}_{d}^{\epsilon _{d}}ax_{d}^{\epsilon _{d}}\equiv a_{d-1},(ya)^{p}\equiv  x_{d}^{\epsilon _{d}}x_{d-1}^{\epsilon _{d-1}}\ldots x_{1}^{\epsilon _{1}}\}]
\end{xalignat*} obtained from the diagram $D_{2}$ on Figure \ref{fig:ndiagram}. It is easy to see that the new generators $z_{1},\ldots ,z_{d-1}$ and $w_{1},\ldots ,w_{d}$, as well as the operator generators $a_{1},\ldots ,a_{d-1}$ may be omitted from the presentation. The primary relations imply that $x_{d}^{x_{d}^{\epsilon _{d}}x_{d-1}^{\epsilon _{d-1}}\ldots x_{1}^{\epsilon _{1}}}=y^{(ya)^{p}}$ and thus $x_{d}=y^{(ya)^{p}\overline{x}_{1}^{\epsilon _{1}}\ldots \overline{x}_{d}^{\epsilon _{d}}}$, therefore $x_{d}=y$ by the last operator relation. The presentation simplifies to 
\begin{xalignat*}{1}
& R(D_{2})=[\{x_{1},\ldots ,x_{m+d}\},\{a\}:\{\textrm{crossing relations of $L$},x_{i}^{x_{d}a}=x_{m+i}\textrm{ for }i=1,\ldots ,d\},\\
& \{(x_{d}a)^{p}\equiv  x_{d}^{\epsilon _{d}}\ldots x_{1}^{\epsilon _{1}}\}]\;,
\end{xalignat*} which is equivalent to the presentation of $R(D_{1})$. This shows the equalities \eqref{eq7} are valid.   
\end{proof}


\begin{example} \label{ex2} Let $K_{0},K_{1}$ and $K_{2}$ be the knots in $L(3,1)$, given in the Figure \ref{fig:trefoil}, linking the exceptional fibre of $L(3,1)$ zero times, once and twice respectively. Let $R$ be the rack, given by the rack matrix $M_{R}=\begin{bmatrix}
1 & 3 & 2 & 1 & 1 & 1\\
3 & 2 & 1 & 2 & 2 & 2\\
2 & 1 & 3 & 3 & 3 & 3\\
5 & 5 & 5 & 5 & 5 & 5\\
4 & 4 & 4 & 4 & 4 & 4\\
6 & 6 & 6 & 6 & 6 & 6\\
\end{bmatrix}$. Since the rack rank of $R$ equals 2, we need to count the colorings of two different diagrams of $K_{i}$, one with an odd writhe and one with an even writhe. We calculate the counting rack invariants of $K_{i}$ with respect to $R$ as
\begin{xalignat*}{2}
& \Phi _{R}^{\mathbb{Z}}(K_{0})=22 & \Phi _{R}^{W}(K_{0})=12+10q\\
& \Phi _{R}^{\mathbb{Z}}(K_{1})=20 & \Phi _{R}^{W}(K_{1})=10+10q\\
& \Phi _{R}^{\mathbb{Z}}(K_{2})=10 & \Phi _{R}^{W}(K_{2})=6+4q
\end{xalignat*}
\begin{figure}[h]
\labellist
\normalsize \hair 2pt
\pinlabel $K_{0}$ at 640 760
\pinlabel $K_{2}$ at 1200 60
\pinlabel $K_{1}$ at 430 40
\endlabellist
\begin{center}
\includegraphics[scale=0.30]{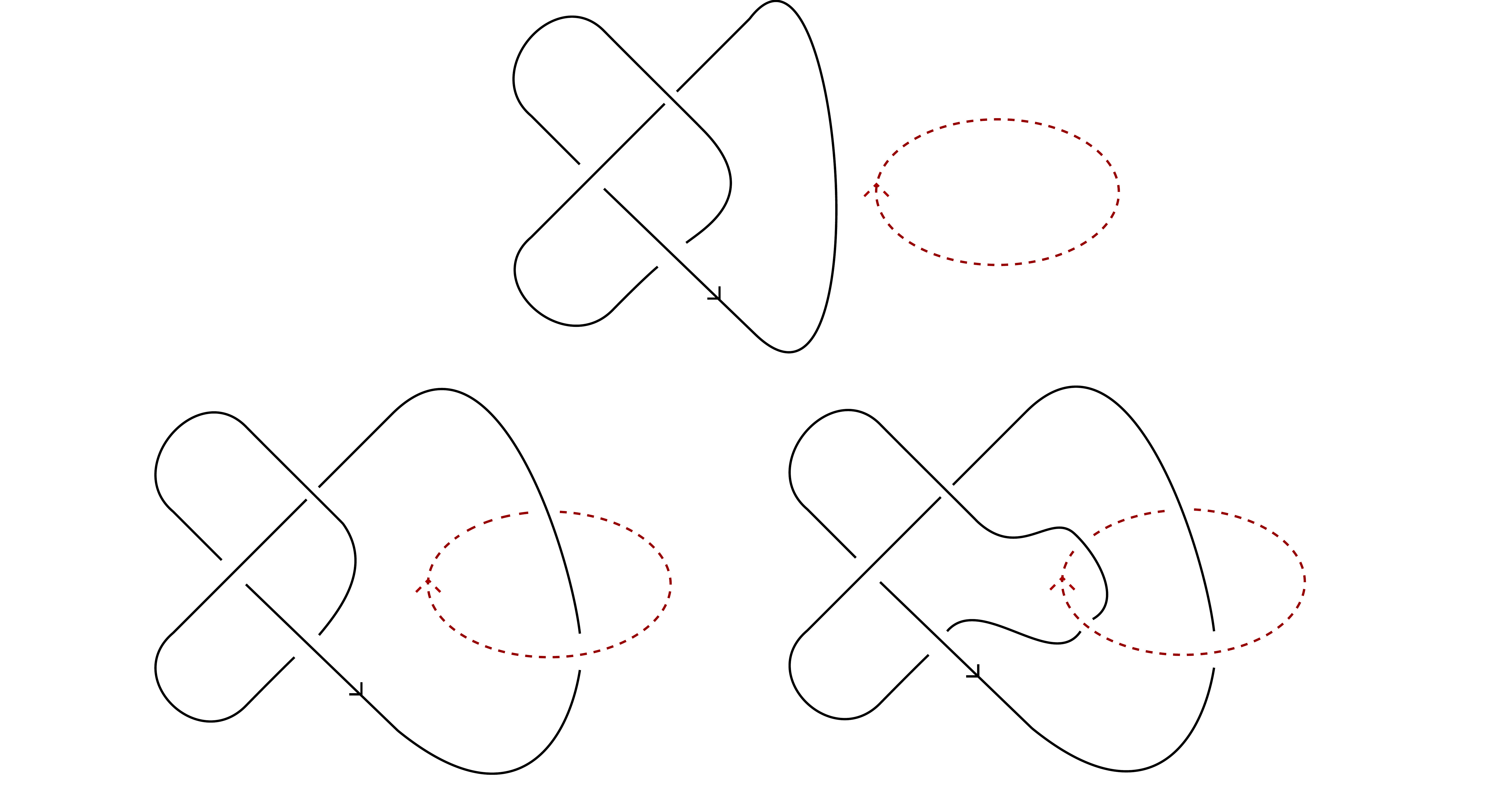}
\caption{The knots $K_{0},K_{1}$ and $K_{2}$ in the example \ref{ex2}}
\label{fig:trefoil}
\end{center}
\end{figure}
\end{example}

\begin{example} \label{ex3} Consider the two links $L_{1},L_{2}$ in $L(3,1)$, given in the Figure \ref{fig:hopf}. 
\begin{figure}[h]
\labellist
\normalsize \hair 2pt
\pinlabel $L_{1}$ at 240 40
\pinlabel $L_{2}$ at 460 40
\endlabellist
\begin{center}
\includegraphics[scale=0.50]{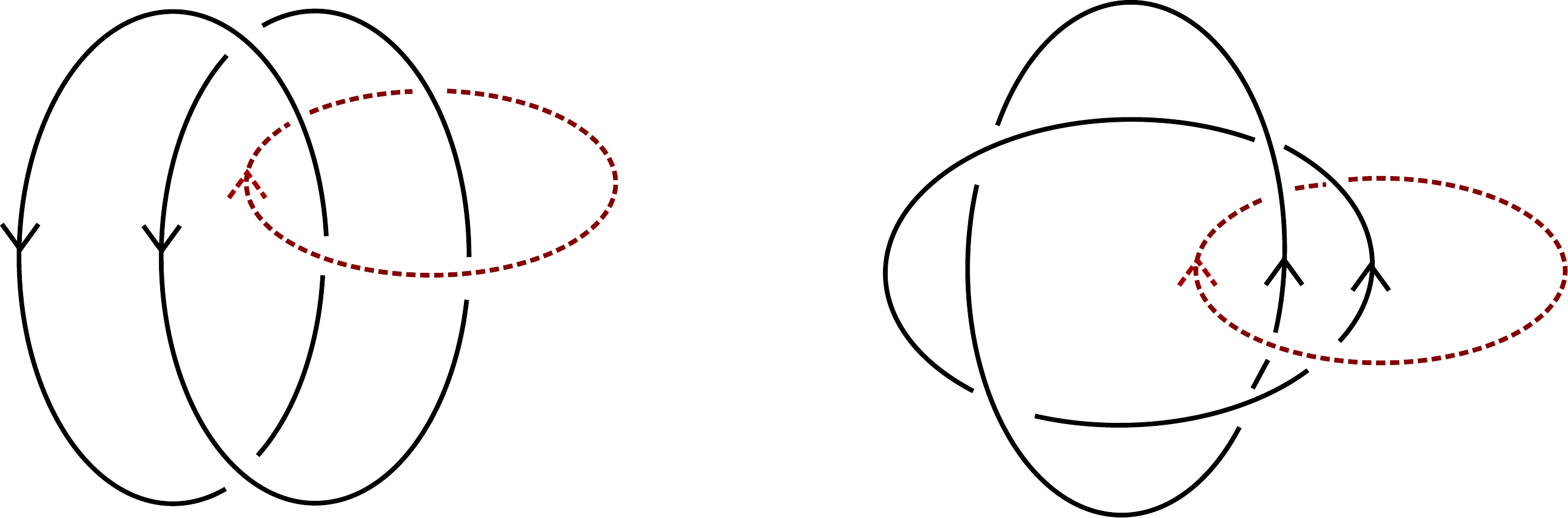}
\caption{The links $L_{1}$ and $L_{2}$ in the example \ref{ex3}}
\label{fig:hopf}
\end{center}
\end{figure}
Let $R$ be the rack of rank 2 with the rack matrix $M_{R}=\begin{bmatrix}
1 & 1 & 1\\
2 & 3 & 3\\
3 & 2 & 2 
\end{bmatrix}$. The counting rack invariants of the links $L_{1}$ and $L_{2}$  with respect to $R$ are calculated as 
\begin{xalignat*}{2}
& \Phi _{R}^{\mathbb{Z}}(L_{1})=12 & \Phi _{R}^{W}(L_{1})=1+3q_{1}+q_{2}+7q_{1}q_{2}\\
& \Phi _{R}^{\mathbb{Z}}(L_{2})=12 & \Phi _{R}^{W}(L_{2})=5+3q_{1}+q_{2}+3q_{1}q_{2}\;,
\end{xalignat*} thus the writhe-enhanced counting invariant distinguishes the links, while the integral rack counting invariant doesn't. 
\end{example}

\end{section}

\newpage

\begin{section}{The rack symmetry invariant}\label{sec4}
In the section \ref{sec3} we described the idea of the counting rack invariants, which starts by visualizing a rack homomorphism of the fundamental rack of a link as a coloring of its diagram. In the case of non-classical links, however, the augmented fundamental rack $R(D)$ carries an additional structure, which arises as a non-trivial action of the fundamental group. 

\begin{lemma} The rack automorphism $A$ of the augmented fundamental rack $R(D)$ corresponds to the generator of the fundamental group of $L(p,1)$. 
\end{lemma}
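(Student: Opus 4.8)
The plan is to identify the operator generator $a$ that induces $A$ via $A(x)=x^a$ with the meridian of the surgery curve $U$, and then to show that this meridian is precisely a generator of $\pi_1(L(p,1))\cong\ZZ_p$. First I would recall the surgery description: since $L(p,1)$ is $p$-surgery on the unknot $U\subset S^3$, its complement $S^3\setminus N(U)$ is a solid torus with $\pi_1=\langle\mu\rangle\cong\ZZ$ generated by the meridian $\mu$ of $U$, while the longitude $\lambda$ of $U$ is null-homotopic there. The $p$-surgery attaches a solid torus along the slope $p\mu+\lambda$, yielding $\pi_1(L(p,1))=\langle\mu\mid\mu^p\rangle\cong\ZZ_p$, generated by $\mu$.

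Next I would match this with the presentation of Proposition \ref{prop1}. There the operator generators $a,a_1,\dots,a_{d-1}$ are the meridians of the arcs of $U$ read off the diagram $D$; after expressing $a_1,\dots,a_{d-1}$ through $a$ and using the conjugation identity $a^{(a^k)}\equiv a$, the only remaining operator generator is $a$, which is therefore the meridian $\mu$ of $U$. Hence the automorphism $A(x)=x^a$ is nothing but the action of $\mu$ on the rack.

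To pin down the order, I would use the inclusion $L(p,1)\setminus N(L)\hookrightarrow L(p,1)$, which induces a surjection $\pi_1(L(p,1)\setminus L)\twoheadrightarrow\pi_1(L(p,1))$ whose kernel is normally generated by the meridians of $L$, that is by the primary generators $x_1,\dots,x_{m+d}$ (recall $\partial(x_i)=x_i$ in the operator group, so that the crossing relators become Wirtinger relators in $\pi_1$ of the complement). Under this map every such relator collapses, and the operator relation $a^p\equiv x_d^{\epsilon_d}\cdots x_1^{\epsilon_1}$ of the presentation \eqref{pres1} becomes $a^p\equiv1$; hence the image of $a$ generates $\pi_1(L(p,1))\cong\ZZ_p$, in agreement with the surgery computation. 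This also reconciles Lemma \ref{lemma2}: the identity $A^p=F$ is the rack-level shadow of $a^p\equiv x_d^{\epsilon_d}\cdots x_1^{\epsilon_1}$, and $F$ becomes trivial exactly after filling in $L$, matching $\mu^p=1$.

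The main obstacle is conceptual rather than computational: one must carefully distinguish the operator group $\pi_1(L(p,1)\setminus L)$, in which $a^p\neq1$ in general, from the ambient group $\pi_1(L(p,1))$, in which the image of $a$ has order dividing $p$. The phrase ``corresponds to'' is made precise by the filling-in surjection above, whose content is exactly that $a$ maps to a generator of $\ZZ_p$.
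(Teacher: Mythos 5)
Your argument is correct, and it reaches the same conclusion as the paper --- namely that under the inclusion-induced map $\pi_1(L(p,1)\setminus N(L))\to\pi_1(L(p,1))$ the meridians $x_i$ of $L$ die while $a$ survives as a generator of $\ZZ_p$ --- but by a different mechanism. The paper invokes the long exact sequence of the pair $(Q,Q_0)$ with $Q=L(p,1)$ and $Q_0$ the link exterior: each $x_i$ bounds a meridional disc of $N(L)$, hence lies in the image of $\partial\colon\pi_2(Q,Q_0)\to\pi_1(Q_0)$ and maps to $0$ in $\pi_1(Q)$, whereas the claim that $i(a)\neq 0$ and $i(a)^p=0$ is asserted from the geometry of the exceptional fibre. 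You instead run the van Kampen computation: the kernel of the filling-in surjection is normally generated by the meridians of $L$, and killing them turns the operator relation $a^p\equiv x_d^{\epsilon_d}\cdots x_1^{\epsilon_1}$ of \eqref{pres1} into $a^p=1$, while the independent surgery presentation $\pi_1(L(p,1))=\langle\mu\mid\mu^p\rangle$ forces the image of $a=\mu$ to be a generator of order exactly $p$. Your route has the advantage of actually certifying that the image of $a$ is nontrivial of order $p$ (the step the paper leaves largely to geometric intuition), at the cost of relying on the surgery/van Kampen formalism rather than the homotopy exact sequence of the pair; the two are equivalent in content. One small point worth making explicit: $a$ is the meridian of one particular arc of $U$ in the complement of $U\cup L$, and the various arc-meridians are only conjugate to one another, but since they all map to the same element of the abelian group $\ZZ_p$ this does not affect the conclusion.
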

\begin{proof} Recall the definition of the augmented fundamental rack of a link in the subsection \ref{subs12}. For a framed link $L$ in $Q=L(p,1)$, denote by $N(L)$ the regular neighbourhood of $L$ and let $Q_{0}=\textrm{closure}(L(p,1)-N(L))$. 

By the proposition \ref{prop1}, $R(D)$ is given by the general presentation 
\begin{xalignat*}{1}
& [\{x_{1},\ldots ,x_{m+d}\},\{a\}:\{\textrm{crossing relations of $L$}, x_{i}^{a}=x_{m+i} \textrm{ for $i=1,\ldots ,d$}\},\\
& \{\left (x_{d}^{\epsilon _{d}}\ldots x_{2}^{\epsilon _{2}}x_{1}^{\epsilon _{1}}\right )a\equiv a\left (x_{d}^{\epsilon _{d}}\ldots x_{2}^{\epsilon _{2}}x_{1}^{\epsilon _{1}}\right ),\, a^{p}\equiv x_{d}^{\epsilon _{d}}\ldots x_{2}^{\epsilon _{2}}x_{1}^{\epsilon _{1}}\}] \textrm{ if $d>0$, and }
\end{xalignat*}
\begin{xalignat*}{1} 
& [\{x_{1},\ldots ,x_{m+d}\},\{a\}:\{\textrm{crossing relations of $L$}\},\{a^{p}\equiv 1\}] 
\end{xalignat*} otherwise. By \cite[page 379]{FR}, the presentation of $\pi _{1}(Q_{0})$ may then be obtained as  
\begin{xalignat*}{1}
& \langle x_{1},\ldots ,x_{m+d},a\, :\, \textrm{Wirtinger relations of $L$}, a^{-1}x_{i}a=x_{m+i} \textrm{ for $i=1,\ldots ,d$},\\
& \left (x_{d}^{\epsilon _{d}}\ldots x_{2}^{\epsilon _{2}}x_{1}^{\epsilon _{1}}\right )a=a\left (x_{d}^{\epsilon _{d}}\ldots x_{2}^{\epsilon _{2}}x_{1}^{\epsilon _{1}}\right ),\, a^{p}=x_{d}^{\epsilon _{d}}\ldots x_{2}^{\epsilon _{2}}x_{1}^{\epsilon _{1}}\rangle \textrm{ if $d>0$, and }
\end{xalignat*} $\left <x_{1},\ldots ,x_{m+d},a\, :\, \textrm{Wirtinger relations of $L$}, a^{p}=1 \right >$ otherwise. Note that in the presentation of $R(D)$, the generator $x_{i}$ represents a homotopy class of a path $\gamma $, while in the presentation of $\pi _{1}(Q_{0})$, we denote by $x_{i}$ the homotopy class of the loop $\overline{\gamma }\cdot m_{\gamma (0)}\cdot \gamma $ - this is a loop which encircles the arc $x_{i}$ in the diagram of $L$. 

Consider the following part of the long exact sequence of the pair $(Q,Q_{0})$:
\begin{xalignat*}{1}
& \ldots \longrightarrow \pi _{2}(Q)\longrightarrow \pi _{2}(Q,Q_{0})\overset{\partial }{\longrightarrow }\pi _{1}(Q_{0})\overset{i}{\longrightarrow }\pi _{1}(Q)\longrightarrow \ldots 
\end{xalignat*} Every generator $x_{i}$ of $\pi _{1}(Q_{0})$ represents a boundary of a meridinal disc of the link $L$, thus $x_{i}=\partial d_{i}$ for some $d_{i}\in \pi _{2}(Q,Q_{0})$ and consequently $i(x_{i})=0\in \pi _{1}(Q)$. The operator generator $a$ (which represents the rack automorphism $A$), however, is not trivial in   $Q=L(p,1)$: it represents the boundary of a meridinal disc of the exceptional fiber of $L(p,1)$, thus $i(a)\neq 0$ and $i(a)^{p}=0$.   
\end{proof}

We have shown that the rack automorphism $A$ originates from the action of the fundamental group $\pi _{1}(L(p,1))$ on the augmented fundamental rack $R(D)$. By the definition \ref{def4}, every rack homomorphism $f\colon R(D)\to X$ has an induced action on the image $f(R(D))\subset X$. If $X$ is a finite rack, this action may be given as an element of the symmetric group $S_{|X|}$. 

Let $X$ be a finite rack with the underlying set $\{1,2,\ldots ,m\}$ and let $L$ be a link in $L(p,1)$ with the fundamental rack $R(D)$. Any rack homomorphism $f\colon R(D)\to X$ defines a permutation $\sigma _{f}\in S_{m}$ by 
\[
 \sigma _{f}(k) = 
  \begin{cases} 
   f(A(x)) & \text{if } k=f(x)\;, \\
   k       & \text{if } k\notin f(R(D))\;.
  \end{cases}
\] This permutation is well defined by the definition of the rack homomorphism \ref{def4}. Denote by $\rm{ord}$$(\sigma _{f})$ the order of the permutation $\sigma _{f}\in S_{m}$. Instead of just counting the number of homomorphisms, we might count the number of permutations of a given order that these homomorphisms define. In this way we may further enhance the rack counting invariant of the fundamental rack of a link in $L(p,1)$. 
\begin{definition} Let $X$ be a finite rack with rack rank $N$ and let $L$ be an $n$-component link in $L(p,1)$. The \textbf{rack symmetry invariant} of $L$ with respect to $X$ is $$\Phi _{X}^{Sym}(L)=\sum _{\mathbf{w}\in (\mathbb{Z}_{N})^{n}}\left (\sum _{f\in Hom(R(D,\mathbf{w}),X)}x^{\rm{ord}(\sigma _{f})-1}\right )\;.$$
The \textbf{writhe-enhanced rack symmetry invariant} of $L$ with respect to $X$ is $$\Phi _{X}^{W, Sym}(L)=\sum _{\mathbf{w}\in (\mathbb{Z}_{N})^{n}}\left (\sum _{f\in Hom(R(D,\mathbf{w}),X)}x^{\rm{ord}(\sigma _{f})-1}\right )q^{\mathbf{w}}\;.$$
\end{definition}

\begin{figure}[h]
\labellist
\normalsize \hair 2pt
\pinlabel $K_{0}$ at 450 40
\pinlabel $K_{1}$ at 990 40
\endlabellist
\begin{center}
\includegraphics[scale=0.30]{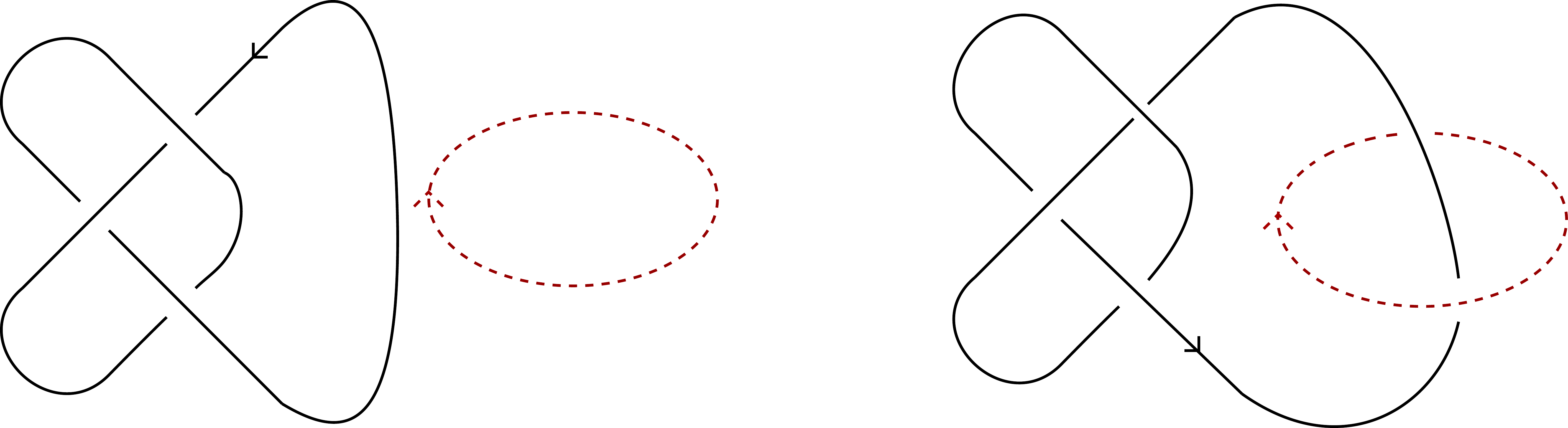}
\caption{Knots $K_{0}$ and $K_{1}$ in the example \ref{ex5}}
\label{fig:trefoil01}
\end{center}
\end{figure}
\begin{example} \label{ex5} Consider the knots $K_{0}$ and $K_{1}$ in $L(2,1)$, whose diagrams are shown in the Figure \ref{fig:trefoil01}. 
Let $R$ be the rack with the rack matrix $M_{R}=\begin{bmatrix}
1 & 4 & 2 & 3\\
3 & 2 & 4 & 1\\
4 & 1 & 3 & 2\\
2 & 3 & 1 & 4
\end{bmatrix}$. The rack rank of $R$ equals 1, which means $R$ is a quandle. The rack counting invariant with respect to $R$ does not distinguish $K_{1}$ from $K_{0}$, since $\phi _{R}^{\ZZ }(K_{0})=\phi _{R}^{\ZZ }(K_{1})=16$. Calculating the rack symmetry invariant, however, gives 
\begin{xalignat*}{2}
& \phi _{R}^{Sym}(K_{0})=16\, , & \phi _{R}^{Sym}(K_{1})=4+12x^{2}\;,
\end{xalignat*}  thus the rack symmetry invariant distinguishes $K_{1}$ from $K_{0}$. 
\end{example}

Another refinement of the rack counting invariant are the rack cocycle invariants. These have been defined for the classical links in  \cite{SN1}, \cite{EN}. The construction might also be generalized by defining the rack cocycle invariant of links in $L(p,1)$.  

The \verb|Phyton| code for computing the rack counting invariants of links in the lens spaces $L(p,1)$ will be available to any interested reader upon request. 

\end{section}

\end{document}